\newtheorem{theorem}{Theorem}[section]
\newtheorem{lemma}[theorem]{Lemma}
\newtheorem{proposition}[theorem]{Proposition}
\newtheorem{corollary}[theorem]{Corollary}
\theoremstyle{definition}
\newtheorem{example}[theorem]{Example}
\theoremstyle{remark}
\newtheorem{remark}[theorem]{Remark}
\numberwithin{equation}{section}
\renewcommand{\i}{\mathbf{i}}
\providecommand{\pen}{\mathbf{p}_{\epsilon, n}}
\newcommand{\p}{\textbf{p}}
\begin{document}

\title[Maximising Bernoulli measures and dimension gaps]{Maximising Bernoulli measures and dimension gaps for countable branched systems}

\author{Simon Baker and Natalia Jurga}
\address{Simon Baker: Mathematics institute, University of Warwick, Coventry, CV4 7AL, UK}
\email{simonbaker412@gmail.com}
\address{Natalia Jurga: Mathematics institute, University of Warwick, Coventry, CV4 7AL, UK}
\email{N.Jurga@warwick.ac.uk }

\date{\today}

\subjclass[2010]{}

\begin{abstract}
Kifer, Peres, and Weiss proved in \cite{KPW} that there exists $c_0>0,$ such that $\dim \mu\leq 1-c_0$ for any probability measure $\mu$ which makes the digits of the continued fraction expansion i.i.d. random variables. In this paper we prove that amongst this class of measures, there exists one whose dimension is maximal. Our results also apply in the more general setting of countable branched systems.
\end{abstract}

\keywords{Continued fractions, Bernoulli measures, Dimensions of measures.}
\maketitle

\section{Introduction}\label{sec:1}
Let $x\in[0,1]\setminus \mathbb{Q}.$ Then as is well known, there exists a unique sequence $(a_i)\in\mathbb{N}^{\mathbb{N}}$ such that $$x=\cfrac{1}{a_1+\cfrac{1}{a_2+\cfrac{1}{a_3+\ldots}}}.$$ The sequence $(a_i)$ is called the continued fraction expansion of $x$. We can generate $(a_i)$ using the Gauss map $T:[0,1]\setminus \mathbb{Q}\to [0,1]\setminus \mathbb{Q},$ which is defined to be $$T(x)=\frac{1}{x}\, \textrm{(mod }1).$$ The sequence $(a_i)$ is then constructed via the rule $$a_i=\Big\lfloor{\frac{1}{T^{i-1}(x)}}\Big\rfloor.$$ Where $\lfloor \cdot\rfloor$ denotes the integer part. One can study the  statistical properties of $T$ using the Gauss measure $\mu_{G}$, which is given by $$\mu_{G}(A)=\frac{1}{\log 2}\int_{A} \frac{1}{1+x}\, \mathrm{d} x$$ for any Borel subset $A\subset [0,1]$. The measure $\mu_{G}$ is $T$-invariant and ergodic. Importantly $\mu_{G}$ is also absolutely continuous with respect to the Lebesgue measure. Consequently one can use $\mu_{G}$ to derive statistical information about the sequence $(a_i)$ for Lebesgue almost every $x$.

 Using the shift space $(\mathbb{N}^{\mathbb{N}},\sigma)$ one can ``code'' the dynamics of $T$. Let $\Pi:\mathbb{N}^{\mathbb{N}}\to [0,1]\setminus \mathbb{Q}$ be the map satisfying $$\Pi((a_i)):=\cfrac{1}{a_1+\cfrac{1}{a_2+\cfrac{1}{a_3+\ldots}}}.$$ Then $T\circ \Pi= \Pi \circ \sigma$. Where $\sigma$ is the usual shift map. One can define many $T$-invariant measures on $[0,1]\setminus \mathbb{Q}$ using the coding map $\Pi$. Indeed, for any $\sigma$-invariant measure $m$, one can define a $T$-invariant measure $\Pi_* m= m \circ \Pi^{-1}.$ The fact that $\Pi_* m$ is $T$-invariant follows from the relation $T\circ \Pi= \Pi \circ \sigma$. We call $\Pi_* m$ the pushforward of $m$. The simplest $\sigma$-invariant measures on $\mathbb{N}^{\mathbb{N}}$ are the Bernoulli measures $m_{\textbf{p}}$ corresponding to a probability vector $\textbf{p}=(p_i)_{i=1}^{\infty}$. In what follows we let $\mu_{\p}:=\Pi_* m_\p.$ From a statistical perspective, it would be highly desirable for the pushforward of a Bernoulli measure to be absolutely continuous with respect to the Lebesgue measure. This is unfortunately not the case for $T$, and we are forced to realign our expectations and hope that there exists a ``large'' set whose dynamics can be described by the pushforward of a Bernoulli measure. For us large will be described by the dimension of a measure. For an arbitrary Borel probability measure $\mu$ supported on $[0,1],$ we define the dimension of $\mu$ to be $$\dim (\mu):=\inf\{ \dim_{H}(A): \mu(A)=1\}.$$ One can prove using the thermodynamic formalism developed by Walters \cite{Wal}, and a result of Kinney and Pitcher \cite{KP}, that whenever $-\sum p_i \log p_i<\infty$ we have
\begin{equation}
\label{Walters}
\dim \mu_{\textbf{p}}<1.
\end{equation}  What is not clear from \eqref{Walters} is whether $\dim  \mu_{\textbf{p}}$ can be arbitrarily close to $1$. This problem is difficult since $\dim(\cdot)$ is not necessarily upper semi-continuous as a real valued function on the space of $T$-invariant probability measures equipped with the weak star topology, and the set $\{\mu_\p\}$ is not compact. That being said, an answer to this question was obtained in a paper of Kifer, Peres, and Weiss \cite{KPW}, who proved the following theorem.
\begin{theorem}
	\label{KPW theorem}
$\sup_{\p}\dim \mu_{\p}<1-10^{-7}.$
\end{theorem}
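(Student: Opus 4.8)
The plan is to reduce the statement to a quantitative separation between the entropy and Lyapunov exponent of $\mu_{\p}$, and then to exploit the fact that the unique measure of dimension $1$ is the Gauss measure, whose digits are \emph{not} independent. First I would record the dimension formula: combining the thermodynamic formalism of \cite{Wal} with \cite{KP} gives
\begin{equation*}
\dim\mu_{\p} = \frac{h(\p)}{\lambda(\p)}, \qquad h(\p) = -\sum_{i} p_i\log p_i, \quad \lambda(\p) = -2\int\log x\,\mathrm d\mu_{\p}(x).
\end{equation*}
Using $1/x = a_1 + Tx$ together with $T$-invariance, $\lambda(\p) = 2\int\log(a_1 + Tx)\,\mathrm d\mu_{\p}$, and here the Bernoulli hypothesis enters decisively: since $m_{\p}$ is a product measure, the digit $a_1\sim\p$ is independent of $Tx\sim\mu_{\p}$. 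Writing $A\sim\p$ and $Y\sim\mu_{\p}$ for independent such variables,
\begin{equation*}
\lambda(\p) = 2\,\mathbb E\big[\log(A+Y)\big].
\end{equation*}
As $\dim\mu_{\p} = h/\lambda\le 1$ always, the theorem is equivalent to a universal $c_0>0$ with a \emph{multiplicative} gap $\lambda(\p) - h(\p)\ge c_0\,\lambda(\p)$ for every $\p$; a merely additive gap $\lambda - h\ge\delta$ would not suffice, since $\lambda(\p)$ is unbounded over $\p$.

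The main tool I would use is a conditional Gibbs comparison. For fixed $y\in(0,1)$ let $q_a(y) = (a+y)^{-2}/Z(y)$, with $Z(y) = \sum_{a\ge1}(a+y)^{-2}$, be the local weights that would render the coding of $x = 1/(a_1+y)$ conformal. Expanding the relative entropy $D(\p\,\|\,\mathbf q(y))$ and integrating against the law of $Y$ gives the exact identity
\begin{equation*}
\lambda(\p) - h(\p) = \mathbb E_Y\big[D(\p\,\|\,\mathbf q(Y))\big] - \mathbb E_Y\big[\log Z(Y)\big].
\end{equation*}
This is the quantitative form of the assertion that $\lambda - h$ is the relative entropy rate of the Bernoulli measure with respect to the Gauss measure (the unique equilibrium state of $-\log|T'|$, for which the two terms cancel). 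Both contributions are now explicit: $\log Z(y)$ is a bounded, decreasing function of $y$, and $D(\p\,\|\,\mathbf q(y))$ measures how badly the \emph{fixed} vector $\p$ fits the $y$-\emph{dependent} profile $(a+y)^{-2}$.

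I would then split into two regimes according to the size of $\lambda(\p)$, equivalently the heaviness of the tail of $\p$. When the tail is heavy ($\lambda$ large), a direct estimate suffices: since $\log A\le\log(A+Y)\le\log(A+1)\le\log A+\log 2$ one has $\lambda(\p)\in[2\mathbb E\log A,\,2\mathbb E\log A+2\log 2]$, and the maximum-entropy bound for fixed $\mathbb E[\log A]$ forces $h(\p)\le(1-c_1)\lambda(\p)$. The extremal family $p_a\propto a^{-s}$ with $s\downarrow 1$ already realises $\dim\to\tfrac12$, so the heavy-tailed case is comfortably below $1$.

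The delicate regime is the light-tailed one, where $\lambda$ is bounded and $\mu_{\p}$ tries to imitate $\mu_G$; here it suffices to bound $\lambda-h$ below by an absolute constant, and this is the step I expect to be the main obstacle. It cannot be handled by any soft compactness or semicontinuity argument --- precisely the failure highlighted before Theorem~\ref{KPW theorem}. Instead one must show directly that $\mathbb E_Y[D(\p\,\|\,\mathbf q(Y))]$ is bounded below. The mechanism is that whenever $h(\p)$ is not already negligible, $\p$ charges at least two digits, so $\mu_{\p}$ is non-atomic and $Y=Tx$ is genuinely spread across $(0,1)$; meanwhile the optimal profile $\mathbf q(y)$ moves with $y$ (for instance $q_1(y)/q_2(y) = (2+y)^2/(1+y)^2$ is non-constant), so no single $\p$ can keep $D(\p\,\|\,\mathbf q(Y))$ small on a $Y$-set of substantial measure. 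Turning this qualitative obstruction into an explicit lower bound --- controlling the spread of $Y$ from below in terms of $\p$ and combining it with the $-\mathbb E_Y[\log Z(Y)]$ term --- is where the real work lies, and is the reason the resulting constant $10^{-7}$ is so far from optimal.
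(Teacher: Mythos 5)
Your reductions are correct as far as they go: the formula $\dim\mu_{\p}=h(\p)/\lambda(\p)$, the independence of $a_1$ and $Tx$ under a Bernoulli measure giving $\lambda(\p)=2\,\mathbb{E}[\log(A+Y)]$, and the identity $\lambda(\p)-h(\p)=\mathbb{E}_Y[D(\p\,\|\,\mathbf{q}(Y))]-\mathbb{E}_Y[\log Z(Y)]$ all check out (expanding $D(\p\,\|\,\mathbf{q}(y))=-h(\p)+2\sum_a p_a\log(a+y)+\log Z(y)$ and integrating in $y$ gives it in one line). The heavy-tailed regime is also plausibly handled by the maximum-entropy bound. But the proposal is not a proof, and you say so yourself: the entire content of the theorem sits in the light-tailed regime, where you leave the uniform lower bound on $\mathbb{E}_Y[D(\p\,\|\,\mathbf{q}(Y))]-\mathbb{E}_Y[\log Z(Y)]$ unproved. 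The qualitative mechanism you invoke --- that $\mathbf{q}(y)$ varies with $y$ while $\p$ is fixed --- does not upgrade to a uniform constant, because the law of $Y$ is $\mu_{\p}$ itself and moves with $\p$. Since the family $\{\mu_{\p}\}$ is not weak-star compact and $\dim$ is not upper semicontinuous (exactly the difficulty the paper flags before the theorem), ``no single $\p$ keeps $D$ small'' does not preclude a sequence $\p_k$ along which the mean relative entropy tends to $0$ while $-\mathbb{E}[\log Z(Y_k)]\to 0$; note also that $-\mathbb{E}[\log Z(Y)]$ has no definite sign, since $Z(0^+)=\pi^2/6>1>Z(1^-)$, so the relative-entropy term must do all the work and must dominate a possibly negative correction. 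Closing this requires a quantitative stability estimate (e.g.\ a Pinsker-type bound combined with a lower bound on the spread of $Y$ in terms of $h(\p)$, uniform over all light-tailed $\p$), and nothing in the sketch supplies it; nor is there any route to the explicit constant $10^{-7}$, which is the actual content of the statement.

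Two secondary gaps. First, probability vectors with $h(\p)=\infty$ (hence $\lambda(\p)=\infty$, by the argument of Lemma \ref{Ruelle Margulis}) fall outside the $h/\lambda$ formalism entirely; they must be treated separately, as this paper does in Proposition \ref{infinite h} via the exceptional sets $E_\lambda$ and the bound $\dim_H(E_\lambda)\le\inf_{s_0<s<1}(s+q(s)/\lambda)$ of Theorem \ref{Lyapunov bound}, which yields $\dim\mu_{\p}\le s_0=1/2$ in that case; your heavy-tail discussion should be reorganised around this. Second, for context: this theorem is quoted from \cite{KPW}, and the proof there is structurally different from your Gibbs-comparison plan --- it shows that a generic point of \emph{every} $\mu_{\p}$ lies in a single, measure-independent set with exceptional large-deviation asymptotics, and then bounds the Hausdorff dimension of that set. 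That device is precisely what converts the qualitative obstruction (``Bernoulli measures cannot imitate the Gauss measure'') into a uniform numerical gap, and it is the ingredient your sketch identifies as ``the real work'' but does not provide.
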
 Theorem \ref{KPW theorem} was recently extended by Rapaport \cite{Rap} who showed that there exists $c_0>0,$ such that whenever $\mu$ is a measure which makes the digits independent, not necessarily identically distributed, then $\dim \mu <1-c_0$. In an upcoming paper of the second author \cite{Jur}, another proof is given that there exists a $c_0>0$, which can be made explicit, such that $\sup_{\p}\dim \mu_{\p}<1-c_0.$ This proof makes use of techniques from thermodynamic formalism.

The main result of this paper gives a new proof that $\sup_{\p}\dim  \mu_{\p}<1$. Our result is weaker than Theorem \ref{KPW theorem} in the sense that we do not achieve any quantitative information on the size of the \emph{dimension gap}. However, we improve upon Theorem \ref{KPW theorem} by showing that there exists a Bernoulli measure whose pushforward achieves the supremum. 

\begin{theorem}
	\label{supremum cf}
There exists a probability vector $\textbf{p}^*$ such that $\sup_{\textbf{p}}\dim \mu_{\textbf{p}}=\dim  \mu_{\textbf{p}^*}.$
\end{theorem}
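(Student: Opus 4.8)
The plan is to reduce everything to the dimension formula $\dim\mu_{\mathbf p}=h(\mathbf p)/\lambda(\mathbf p)$, where $h(\mathbf p)=-\sum_i p_i\log p_i$ is the entropy and $\lambda(\mathbf p)=\int \log|T'|\,d\mu_{\mathbf p}=\int_0^1 -2\log x\,d\mu_{\mathbf p}$ the Lyapunov exponent; this is valid for $h(\mathbf p)<\infty$ by the thermodynamic formalism of Walters and the result of Kinney and Pitcher already invoked for \eqref{Walters}. Writing $L(\mathbf p)=\sum_i p_i\log i$ and using that $\mu_{\mathbf p}(\Delta_i)=p_i$ with $x\in(\tfrac1{i+1},\tfrac1i)$ on the cylinder $\Delta_i$, I record the elementary two sided bound $2L(\mathbf p)<\lambda(\mathbf p)\le 2L(\mathbf p)+2\log 2$, so that $\lambda$ and $2L$ differ by a bounded amount. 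I also record the Gibbs inequality: testing $-\sum p_i\log p_i$ against the probability vector $i^{-\beta}/\zeta(\beta)$ gives, for every $\beta>1$, $h(\mathbf p)\le \beta L(\mathbf p)+\log\zeta(\beta)$.

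Two consequences drive the proof. First, $s:=\sup_{\mathbf p}\dim\mu_{\mathbf p}>\tfrac12$: for the vector $p_i=i^{-3/2}/\zeta(3/2)$ the Gibbs inequality is an equality, so $h=\tfrac32 L+\log\zeta(\tfrac32)$ with $L=-\zeta'(\tfrac32)/\zeta(\tfrac32)$, and combining with $\lambda\le 2L+2\log2$ gives $\dim\mu_{\mathbf p}>\tfrac12$ after inserting the numerical values. Second, high values of $L$ are inefficient: by the Gibbs inequality and $\lambda>2L$,
\[
\dim\mu_{\mathbf p}=\frac{h(\mathbf p)}{\lambda(\mathbf p)}\le \frac{\beta L(\mathbf p)+\log\zeta(\beta)}{2L(\mathbf p)}\xrightarrow{L(\mathbf p)\to\infty}\frac{\beta}{2},
\]
and letting $\beta\downarrow1$ shows that $\dim\mu_{\mathbf p}>\tfrac12$ forces $L(\mathbf p)$, and hence $\lambda(\mathbf p)$, to be bounded by a constant $M$ depending only on $s$. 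In particular a maximising sequence may be assumed to consist of finite entropy vectors with $\lambda(\mathbf p^{(n)})\le M$.

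Now fix a maximising sequence $\mathbf p^{(n)}$ and, by Tychonoff, pass to a subsequence along which $p_i^{(n)}\to p_i^*$ for every $i$ and along which $h(\mathbf p^{(n)})$ and $\lambda(\mathbf p^{(n)})$ converge. The boundedness of $\lambda$ immediately prevents escape of mass to infinity: if $\theta:=1-\sum_i p_i^*>0$ then $\liminf_n\sum_{i>K}p_i^{(n)}\ge\theta$ for every $K$, whence $\lambda(\mathbf p^{(n)})\ge 2\log K\sum_{i>K}p_i^{(n)}\to\infty$, a contradiction; thus $\mathbf p^*$ is a genuine probability vector. Writing $h^*=h(\mathbf p^*)$, $\lambda^*=\lambda(\mathbf p^*)$ and, using Fatou and the cylinderwise weak convergence $\mu_{\mathbf p^{(n)}}\to\mu_{\mathbf p^*}$, setting $a=\lim_n h(\mathbf p^{(n)})-h^*\ge0$ and $b=\lim_n\lambda(\mathbf p^{(n)})-\lambda^*\ge0$ for the ``escaped'' entropy and Lyapunov exponent, I claim the escaped part has dimension at most $\tfrac12$, i.e. $a\le \tfrac12 b$. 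This is where the tail Gibbs estimate enters: for the tail mass $w=\sum_{i>K}p_i$ one has $\sum_{i>K}(-p_i\log p_i)\le -w\log w+\beta\sum_{i>K}p_i\log i\le -w\log w+\tfrac{\beta}{2}\lambda_{>K}$, and since $\theta=0$ forces $w\to0$ as first $n\to\infty$ then $K\to\infty$, the term $-w\log w$ vanishes and letting $\beta\downarrow1$ yields $a\le\tfrac12 b$.

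Finally I combine these through the mediant identity $s=\dfrac{h^*+a}{\lambda^*+b}$. If $b>0$ then, using $a\le\tfrac12 b$ and $s>\tfrac12$,
\[
h^*=s\lambda^*+(sb-a)\ge s\lambda^*+b\Big(s-\tfrac12\Big)>s\lambda^*,
\]
so that $\dim\mu_{\mathbf p^*}=h^*/\lambda^*>s$, contradicting the definition of $s$ as a supremum. Hence $b=0$, which forces $a=0$, and therefore $\dim\mu_{\mathbf p^*}=h^*/\lambda^*=\lim_n \dim\mu_{\mathbf p^{(n)}}=s$; the vector $\mathbf p^*$ is the desired maximiser. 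The heart of the argument, and the only genuine obstacle, is the failure of upper semicontinuity of $\dim$ caused by leakage of mass and entropy into ever larger digits; the whole point of combining the a priori bound $s>\tfrac12$, the boundedness of $\lambda$ along maximisers, and the tail estimate $a\le\tfrac12 b$ is to show that any such leakage would have to carry dimension at most $\tfrac12$ and so could only decrease the dimension, which is incompatible with maximisation.
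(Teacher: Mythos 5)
Your argument has one genuine gap, and it sits at the very first reduction: you never bound $\dim \mu_{\mathbf{p}}$ when $h(\mu_{\mathbf{p}})=\infty$, yet the supremum ranges over all probability vectors. Your ``inefficiency'' estimate $\dim\mu_{\mathbf{p}}\leq (\beta L(\mathbf{p})+\log\zeta(\beta))/(2L(\mathbf{p}))$ presupposes the formula $\dim\mu_{\mathbf{p}}=h(\mathbf{p})/\lambda(\mathbf{p})$, which, as you yourself note, is only available for $h(\mathbf{p})<\infty$ (Proposition \ref{volume lemma} requires finite entropy). By your own Gibbs inequality, $h(\mathbf{p})=\infty$ forces $L(\mathbf{p})=\infty$ and hence $\lambda(\mathbf{p})=\infty$, so for these measures your bound says nothing, and the sentence ``a maximising sequence may be assumed to consist of finite entropy vectors'' is unjustified: a priori the supremum could be carried entirely by infinite-entropy vectors, in which case your limit vector $\mathbf{p}^*$ need not maximise. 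Nor is it a triviality that $h=\infty$ implies $\dim\mu_{\mathbf{p}}\leq 1/2$: almost everywhere the relevant cylinder ratio is a quotient $\sum_{j\leq n}(-\log p_{a_j})\big/\sum_{j\leq n}2\log a_j$ of two i.i.d.\ Birkhoff sums each with infinite mean, and no soft expectation heuristic controls its $\liminf$. The paper devotes Proposition \ref{infinite h} to exactly this case: Lemma \ref{Ruelle Margulis} gives $h=\infty\Rightarrow\chi=\infty$, so $\mu_{\mathbf{p}}$-a.e.\ point lies in every exceptional set $E_\lambda$ of super-exponentially shrinking cylinders, and the Kifer--Peres--Weiss covering estimate $\dim_H(E_\lambda)\leq\inf_{s_0<s<1}\bigl(s+q(s)/\lambda\bigr)$ (Theorem \ref{Lyapunov bound}) yields $\dim\mu_{\mathbf{p}}\leq s_0=1/2<s$. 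You need to quote or reprove something of this strength; it is a genuine extra idea, not a detail.

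With that case supplied, the remainder of your proof is correct and takes a genuinely different route from the paper. The paper works with the maximisers $\mu_{\mathbf{p}^L}$ on the first $L$ digits, uses a mass-transport computation (Lemmas \ref{entropy} and \ref{lyapunov}: moving $\epsilon$ of mass from digit $n$ to digit $1$ changes the entropy by about $\epsilon\log(p_1/p_n)$ and the Lyapunov exponent by at least $\epsilon\log(\lambda\tau_n)$) to show these maximisers satisfy a uniform decay $p^L_i\leq C\tau_i^{-\alpha}$ (Proposition \ref{decay prop}), and then proves $\dim$ is continuous on this decay class (Proposition \ref{long prop}) so that a weak star limit of the $\mu_{\mathbf{p}^L}$ maximises. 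You instead run a concentration-compactness argument directly on a maximising sequence: the a priori bound $\lambda(\mathbf{p}^{(n)})\leq M$ (from the zeta-vector Gibbs inequality together with $s>1/2$) rules out escape of mass, your tail estimate $a\leq b/2$ says that any escaped entropy--Lyapunov pair carries ``dimension'' at most $1/2<s$, and the mediant inequality then shows leakage would push $h^*/\lambda^*$ strictly above $s$, a contradiction; so $a=b=0$ and the limit attains $s$. Your route is shorter and quite transparent, but it is bespoke to the Gauss map ($s_0=1/2$, the bound $2L<\lambda\leq 2L+2\log 2$, and the test vector $i^{-\beta}/\zeta(\beta)$ all exploit $|T'(x)|=x^{-2}$), whereas the paper's decay-class machinery, at the cost of the distortion and transport lemmas, proves Theorem \ref{Main theorem} for all countable branched systems satisfying $(1)$--$(5)$; your scheme should generalise by replacing $\zeta(\beta)$ with $K_T$-type quantities and $1/2$ with $s_0$, but in every version the infinite-entropy case must be handled separately, exactly as in the paper.
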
It will follow from our proof that the measure $\mu_{\textbf{p}^*}$ appearing in Theorem \ref{supremum cf} satisfies $-\sum p_i^* \log p_i^*<\infty.$ Applying \eqref{Walters}, we see that Theorem \ref{supremum cf} immediately implies $\sup_{\textbf{p}}\dim \mu_{\textbf{p}}<1$. As we will see, Theorem \ref{supremum cf} in fact holds more generally for countable branched systems that satisfy some regularity assumptions. 

Our proof of Theorem \ref{supremum cf} differs significantly from the approaches given in \cite{KPW} and \cite{Rap}. The approach of both of these papers relied on showing that for any of the considered measures $\mu,$ a generic point for $\mu$ is contained in a set exhibiting exceptional large deviation asymptotics. Importantly this exceptional set has no dependence on $\mu.$ Their problem then reduces to determining an upper bound for the Hausdorff dimension of this exceptional set. Our proof relies on studying those Bernoulli measures whose pushforward is supported on only the first $L$ digits. Restricting to this class of measures, it is known that there exists a measure $\mu_{\p^L}$ whose dimension is maximal. We will show that this measure always satisfies a certain decay property. This decay property allows us to rewrite $\dim \mu_{\p^L}$ as an expression involving finitely many digits up to some uniformly small error. Taking a weak star limit along some subsequence of $(\mu_{\p^L})_{L=1}^{\infty}$, we can then use this expression for $\dim  \mu_{\textbf{p}^L}$ to show that the limiting measure in fact achieves the supremum in Theorem \ref{supremum cf}. 

The rest of this paper is arranged as follows. In Section $2$ we recall some background from countable branched systems and state Theorem \ref{Main theorem}. In Section $3$ we prove Theorem \ref{Main theorem}. In Section $4$ we show how Theorem \ref{Main theorem} implies Theorem \ref{supremum cf} and make some concluding remarks.

\section{Preliminaries}
Let $\{I_n=(a_n,b_n)\}_{n=1}^{\infty}$ be a countable collection of disjoint open subintervals of $(0,1)$ such that either $a_1=0$, $b_n=a_{n+1}$ for all $n\in \mathbb{N},$ and $\lim_{n\to\infty} b_n=1$, or  $b_1=1,$ $b_{n+1}=a_{n}$ for all $n\in \mathbb{N},$ and $\lim_{n\to\infty} a_n=0.$   Assume that for each $I_n$ there exists a map $T_n:I_n\to (0,1)$ such that $T_n$ is a $C^2$ bijection from $I_n$ onto $(0,1)$. In what follows we always assume that every $T_n$ is  orientation preserving, or every $T_n$ is orientation reversing. We can then define the orientation preserving or orientation reversing map $T:\cup_{n=1}^{\infty} I_n \to (0,1)$ via the rule $T(x)=T_n(x)$ if $x\in I_n$. Throughout we will assume that $T$ satisfies the following conditions:

\begin{enumerate}
	\item (Monotone derivative). The derivative $T'$ is monotone on $\cup_{n=1}^{\infty} I_n.$ 
	\item (Orientation reversing) If $T$ is orientation reversing and $T'$ is increasing, then for any $n\in\mathbb{N}$ and $x,y\in I_n$ such that $x\geq y,$ we have $(T^2)'(x)\geq (T^2)'(y).$ If $T$ is orientation reversing and $T'$ is decreasing, then for any $n\in\mathbb{N}$ and $x,y\in I_n$ such that $x\geq y,$ we have $(T^2)'(x)\leq (T^2)'(y).$  
	\item (Uniformly expanding). Some iterate of $T$ is uniformly expanding, that is, there exists $l\in\mathbb{N}$ and $\Lambda>1$ such that $$|(T^l)'(x)|\geq \Lambda$$ for all $x\in [0,1]$.
	\item (Renyi condition). There exists $\kappa< \infty$ such that
	$$\sup_{n\in\mathbb{N}}\sup_{x,y,z\in I_n} \big| \frac{T''(x)}{T'(y)T'(z)}\big|\leq \kappa.$$
	\item  There exist $s\in(0,1)$ such that $$\sum_{n=1}^{\infty}|I_n|^s<\infty.$$
\end{enumerate} 
Let us emphasise here that the Gauss map satisfies $(1)-(5)$. Conditions $(1),(3),(4),$ and $(5)$ are standard assumptions. We expect Theorem \ref{Main theorem} holds without assuming condition $(2)$. We define
\begin{eqnarray}
s_0:=\inf \left\{s:\sum_{n=1}^{\infty}|I_n|^s<\infty\right\}.
\label{s0}
\end{eqnarray}
By $(5)$ we know that $s_0<1$. 

Let $\phi_n:(0,1)\to I_n$ be the inverse map of $T_n.$ Given a sequence $(a_i)\in\mathbb{N}^n$ we let $$\phi_{a_1,\ldots,a_n}:=\phi_{a_1}\circ\cdots \circ \phi_{a_n}\textrm{ and }I_{a_1,\ldots,a_n}:=\phi_{a_1,\ldots,a_n}((0,1))).$$  Under our assumptions we can code the dynamics of $T$ using the coding map $\Pi:\mathbb{N}^{\mathbb{N}}\to [0,1]$ defined as follows: $$\Pi((a_i)):=\bigcap_{n=1}^{\infty}\overline{I_{a_1,\ldots,a_n}}.$$  Notice that we again have the relation $\Pi \circ \sigma = T \circ \Pi$. We can use the map $\Pi$ to pushforward Bernoulli measures and again ask what is their dimension. Our main result is the following.

\begin{theorem}
	\label{Main theorem}
	Assume $T$ satisfies properties $(1)-(5)$. Suppose there exists $\mu_{\textbf{p}}$ such that $\dim \mu_{\textbf{p}}>s_0,$ then there exists $\mu_{\textbf{p}^*}$ such that $\sup_{\textbf{p}}\dim  \mu_{\textbf{p}}=\dim  \mu_{\textbf{p}^*}.$
\end{theorem}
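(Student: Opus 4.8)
The plan is to reduce the problem to a finite-dimensional optimisation on nested simplices and then pass to a limit, using a uniform decay estimate to defeat the failure of upper semicontinuity of $\dim(\cdot)$. Throughout I use the dimension formula $\dim \mu_{\mathbf{p}} = h(\mathbf{p})/\lambda(\mathbf{p})$, where $h(\mathbf{p}) = -\sum_n p_n \log p_n$ is the entropy and $\lambda(\mathbf{p}) = \int \log|T'|\, d\mu_{\mathbf{p}}$ is the Lyapunov exponent. Condition $(3)$ gives $\lambda(\mathbf{p}) \geq (\log \Lambda)/l > 0$, and the Renyi condition $(4)$ yields bounded distortion, so that $\lambda(\mathbf{p}) = -\sum_n p_n \log|I_n| + O(1)$ with an error bounded uniformly in $\mathbf{p}$. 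Writing $c_n := -\log|I_n|$, the quantity to be maximised is therefore comparable to $(-\sum_n p_n \log p_n)/(\sum_n p_n c_n)$.

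First I would restrict attention to the compact simplex $\Delta_L$ of probability vectors supported on $\{1,\dots,L\}$. On $\Delta_L$ the maps $h$ and $\lambda$ are continuous and $\lambda$ is bounded below, so $\dim \mu_{\mathbf{p}}$ attains its maximum at some $\mathbf{p}^L$; set $D_L := \dim \mu_{\mathbf{p}^L}$. Since $\Delta_L \subset \Delta_{L+1}$, the sequence $D_L$ is non-decreasing, and approximating an arbitrary finite-entropy vector by its truncations shows that $\lim_L D_L$ equals the supremum $D := \sup_{\mathbf{p}} \dim \mu_{\mathbf{p}}$. The hypothesis that some $\mu_{\mathbf{p}}$ has $\dim \mu_{\mathbf{p}} > s_0$ gives $D > s_0$, so I may fix $s' \in (s_0, D)$; then $\sum_n |I_n|^{s'} < \infty$ by the definition of $s_0$, and $D_L > s'$ for all large $L$.

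The crucial step is a \emph{uniform decay estimate}: there is a constant $C$, independent of $L$, with $p_n^L \leq C |I_n|^{s'}$ for every $n \leq L$ and every large $L$. To obtain this I would compute the first variation of $\dim \mu_{\mathbf{p}}$ at $\mathbf{p}^L$ in the direction that transfers a small mass from digit $n$ to digit $1$. For a linear Lyapunov term this optimality condition forces $p_n^L$ to be proportional to $|I_n|^{D_L}$, the Gibbs weight of the potential $-D_L \log|T'|$; since $D_L > s'$ and $|I_n| < 1$, this already yields $p_n^L \le C|I_n|^{s'}$. Making this rigorous requires controlling the first variation of the genuinely nonlinear term $\int \log|T'|\, d\mu_{\mathbf{p}}$, and this is exactly where the monotonicity hypotheses $(1)$ and $(2)$ enter: they guarantee that $\log|T'|$, respectively $(T^2)'$, is monotone on each branch, so that the variation of the Lyapunov exponent can be compared, up to the uniformly bounded distortion constant from $(4)$, to the variation of $-\sum_n p_n \log|I_n|$. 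I expect this decay estimate to be the main obstacle, since it is what replaces the missing upper semicontinuity and the non-compactness of $\{\mu_{\mathbf{p}}\}$.

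With the decay estimate in hand the limiting argument is routine. The bound $p_n^L \le C|I_n|^{s'}$ together with $\sum_n |I_n|^{s'} c_n < \infty$, which is valid since $s' > s_0$, shows that the tails of $h(\mathbf{p}^L)$ and of $\sum_n p_n^L c_n$ are small uniformly in $L$; in particular no entropy or Lyapunov mass escapes to infinity. Passing to a subsequence along which $\mathbf{p}^L \to \mathbf{p}^*$ coordinatewise, the decay estimate survives in the limit, so $\mathbf{p}^*$ is a genuine probability vector with $\sum_n p_n^* = 1$ and $-\sum_n p_n^* \log p_n^* < \infty$, and $\mu_{\mathbf{p}^L} \to \mu_{\mathbf{p}^*}$ weak star. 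The uniform tail control provides the uniform integrability needed to pass both $h$ and $\lambda$ to the limit, giving $h(\mathbf{p}^L) \to h(\mathbf{p}^*)$ and $\lambda(\mathbf{p}^L) \to \lambda(\mathbf{p}^*)$. Hence $\dim \mu_{\mathbf{p}^*} = h(\mathbf{p}^*)/\lambda(\mathbf{p}^*) = \lim_L D_L = D = \sup_{\mathbf{p}} \dim \mu_{\mathbf{p}}$, which is the desired conclusion.
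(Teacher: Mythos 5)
Your overall route is the paper's route: maximisers $\mathbf{p}^L$ on the finite simplices, a uniform decay estimate for $\mathbf{p}^L$ obtained by a first-variation (mass-transfer) argument in which hypotheses $(1)$, $(2)$ and the Renyi condition control the variation of the Lyapunov exponent, and then a subsequential coordinatewise limit along which the uniform tail control lets entropy and Lyapunov exponent pass to the limit (the paper packages this last step as continuity of $\dim$ on the decay class $D(C,\alpha)$, with $\tau_n^{-1}\asymp|I_n|$ playing the role of your $|I_n|^{s'}$ bound; your ``uniform integrability'' phrasing is an equivalent mechanism). The hypothesis $\dim\mu_{\mathbf{p}}>s_0$ enters your decay estimate exactly as it enters the paper's Proposition on decay.

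There is, however, one genuine gap: you use the formula $\dim\mu_{\mathbf{p}}=h(\mathbf{p})/\lambda(\mathbf{p})$ ``throughout'' and claim that truncating an arbitrary \emph{finite-entropy} vector shows $\lim_L D_L=\sup_{\mathbf{p}}\dim\mu_{\mathbf{p}}$, but the supremum runs over all Bernoulli measures, including those with $h(\mu_{\mathbf{p}})=\infty$, for which the ratio formula is unavailable (one has $h=\infty$ forcing $\chi=\infty$, an indeterminate ratio) and your truncation argument says nothing. A priori such a measure could have dimension exceeding $\lim_L D_L$, and ruling this out is not routine: the paper devotes a separate subsection to it, first showing by a Jensen-inequality argument that $h(\mu_{\mathbf{p}})=\infty$ implies $\chi(\mu_{\mathbf{p}})=\infty$, then using Birkhoff's theorem to place $\mu_{\mathbf{p}}$-a.e.\ point in the exceptional set $E_\lambda$ of points whose cylinder lengths decay faster than $e^{-\lambda n}$ for every $\lambda>0$, and finally invoking the Kifer--Peres--Weiss bound $\dim_{H}(E_\lambda)\leq\inf_{s_0<s<1}\bigl(s+q(s)/\lambda\bigr)$ to conclude $\dim\mu_{\mathbf{p}}\leq s_0$, which is below the supremum by hypothesis. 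Without some substitute for this step your chain of equalities $\lim_L D_L=D=\sup_{\mathbf{p}}\dim\mu_{\mathbf{p}}$ is unjustified; with it, your proposal matches the paper's proof in all essentials.
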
For the Gauss map it can be shown that $s_0=1/2$. So by Theorem \ref{Main theorem} to prove Theorem \ref{supremum cf} it suffices to construct a Bernoulli measure $\mu_{\textbf{p}}$ such that $\dim \mu_{\textbf{p}}>1/2$. We construct such a measure in Section $4$. Note that Theorem \ref{Main theorem} has the following straightforward corollary, which can be used to establish the existence of a dimension gap at $1$.

\begin{corollary}
	\label{gap corollary}
Assume $T$ satisfies properties $(1)-(5)$. Suppose that  $$\dim \mu_{\p}<1 \textrm{ for all }\mu_\p.$$ Then there exists some $c_0>0$ such that $$\sup_{\p}\dim \mu_{\p}\leq 1-c_0.$$
\end{corollary}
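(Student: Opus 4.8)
The plan is to argue by a dichotomy according to whether the supremum $\sup_{\p}\dim \mu_{\p}$ exceeds the critical exponent $s_0$ or not. The key observation is that the hypothesis $\dim \mu_{\p}<1$ for \emph{every} $\p$ does not by itself produce a uniform gap, since a supremum of values each strictly below $1$ could still equal $1$; what rescues us is Theorem \ref{Main theorem}, which in the relevant regime guarantees that the supremum is in fact \emph{attained} by a single Bernoulli measure.

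First I would dispose of the easy case. If $\sup_{\p}\dim \mu_{\p}\leq s_0$, then since property $(5)$ forces $s_0<1$, I may simply take $c_0:=1-s_0>0$ and conclude $\sup_{\p}\dim \mu_{\p}\leq s_0 = 1-c_0$.

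Next I would treat the case $\sup_{\p}\dim \mu_{\p}>s_0$. By definition of the supremum there exists at least one probability vector $\p$ with $\dim \mu_{\p}>s_0$, so the hypothesis of Theorem \ref{Main theorem} is met. Applying the theorem, there exists $\mu_{\p^*}$ with $\sup_{\p}\dim \mu_{\p}=\dim \mu_{\p^*}$; that is, the supremum is attained. Since by assumption $\dim \mu_{\p^*}<1$, setting $c_0:=1-\dim \mu_{\p^*}>0$ gives $\sup_{\p}\dim \mu_{\p}=\dim \mu_{\p^*}=1-c_0\leq 1-c_0$, as required. Combining the two cases, one may take $c_0:=\min\{1-s_0,\, 1-\dim \mu_{\p^*}\}$ whenever the second case applies, and $c_0:=1-s_0$ otherwise.

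The only subtlety---and the single point where the argument would collapse without Theorem \ref{Main theorem}---is the passage from ``every measure has dimension strictly below $1$'' to ``the supremum is strictly below $1$.'' This is precisely the statement that the supremum is attained, so the entire weight of the corollary rests on invoking Theorem \ref{Main theorem} in the case $\sup_{\p}\dim \mu_{\p}>s_0$; no further estimates are needed. I therefore expect the write-up to be short, with the main conceptual content being the clean separation into the two cases and the recognition that attainment of the supremum is exactly what upgrades a pointwise strict inequality into a uniform gap.
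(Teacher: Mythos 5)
Your proposal is correct and takes essentially the same route as the paper, which disposes of the corollary in one line via the identical dichotomy: if the hypothesis of Theorem \ref{Main theorem} fails then $\dim \mu_{\p}\leq s_0<1$ for all $\p$, and otherwise the supremum is attained by some $\mu_{\p^*}$ whose dimension is strictly below $1$ by assumption. Your write-up just spells out the two cases more explicitly; no gap.
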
Corollary \ref{gap corollary} follows since if we fail the hypothesis of Theorem \ref{Main theorem}, we must have $\dim \mu_{\p}\leq s_0$ for all $\mu_\p$ and $s_0<1$.

When studying the dimension of $T$-invariant measures the following dynamical quantities naturally arise. Given a map $T$ satisfying properties $(1)-(5)$ and a $T$-invariant measure $\mu$, we define the entropy of $\mu$ to be $$h(\mu):=\lim_{n\to\infty}\frac{1}{n}\sum_{(a_i)\in\mathbb{N}^n}-\mu(I_{a_1\ldots a_n})\log \mu(I_{a_1\ldots a_n}).$$ Note that when $\mu$ is the pushforward of a Bernoulli measure we have the following simpler expression for $h(\mu)$:$$h(\mu_\p)= -\sum_{i=1}^{\infty}p_i\log p_i.$$  We define the Lyapunov exponent of $\mu$ to be $$\chi(\mu):=\int \log|T'|\,\mathrm{d} \mu.$$ The following well known formula relates the dimension of $\mu$ to these two dynamical quantities, see \cite{MU} for a proof in our general setting, and \cite{KP} for a proof in the setting of the Gauss map.

\begin{proposition}
	\label{volume lemma}
Suppose $T$ satisfies properties $(1)-(5)$. If $\mu$ is an ergodic $T$-invariant measure and $h(\mu)<\infty,$ then $$\dim (\mu)= \frac{h(\mu)}{\chi(\mu)}.$$
\end{proposition}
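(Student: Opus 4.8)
The plan is to reduce the computation of $\dim(\mu)$ to a pointwise estimate on the local scaling of $\mu$, and then to invoke the standard principle that the Hausdorff dimension of a measure is determined by its almost-everywhere local dimension. For $x=\Pi((a_i))$ write $C_n(x):=I_{a_1\ldots a_n}$ for the depth-$n$ cylinder containing $x$.

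First I would control $\mu(C_n(x))$ and $|C_n(x)|$ separately. Since $\mu$ is ergodic and $h(\mu)<\infty$, the Shannon--McMillan--Breiman theorem applied to the generating partition $\{I_n\}_{n=1}^{\infty}$ gives, for $\mu$-a.e.\ $x$,
\begin{equation*}
\lim_{n\to\infty}-\frac{1}{n}\log\mu(C_n(x))=h(\mu).
\end{equation*}
For the diameters, the chain rule $|(T^n)'(y)|=\prod_{i=0}^{n-1}|T'(T^iy)|$ together with Birkhoff's ergodic theorem applied to $\log|T'|\in L^1(\mu)$ gives, for $\mu$-a.e.\ $x$,
\begin{equation*}
\lim_{n\to\infty}\frac{1}{n}\log|(T^n)'(x)|=\int\log|T'|\,\mathrm{d}\mu=\chi(\mu).
\end{equation*}
The Renyi condition (property $(4)$) yields bounded distortion: there is a constant $D\geq 1$, independent of $n$ and of the cylinder, with $|(T^n)'(y)|/|(T^n)'(z)|\leq D$ for all $y,z\in C_n(x)$. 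Since $T^n$ maps $C_n(x)$ bijectively onto $(0,1)$, the mean value theorem gives $D^{-1}\leq |C_n(x)|\cdot|(T^n)'(x)|\leq D$, so that $-\frac{1}{n}\log|C_n(x)|\to\chi(\mu)$ almost everywhere. Dividing the two limits shows that the cylinder ratio satisfies $\log\mu(C_n(x))/\log|C_n(x)|\to h(\mu)/\chi(\mu)$ for $\mu$-a.e.\ $x$.

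It remains to convert this cylinder ratio into the genuine Hausdorff dimension of $\mu$, which I would do by establishing the two inequalities separately. For the upper bound $\dim(\mu)\leq h(\mu)/\chi(\mu)$ it suffices to cover a full-measure set by cylinders: on the set $G$ where the ratio above converges, for $x\in G$ and large $n$ one has $\mu(C_n(x))\geq|C_n(x)|^{h(\mu)/\chi(\mu)+\varepsilon}$, and since distinct cylinders are nested or disjoint, disjointifying a cover and summing $\mu$ over the selected cylinders yields $\sum|C_n|^{h(\mu)/\chi(\mu)+\varepsilon}\leq 1$, whence $\dim_H(G)\leq h(\mu)/\chi(\mu)+\varepsilon$ for every $\varepsilon>0$. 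For the lower bound I would use the mass distribution principle, which requires the genuine local estimate $\liminf_{r\to 0}\log\mu(B(x,r))/\log r\geq h(\mu)/\chi(\mu)$; this amounts to comparing the mass of a ball $B(x,r)$ with that of a cylinder of comparable diameter.

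The main obstacle is precisely this last comparison in the countable-alphabet setting. Unlike the finite-alphabet case, a small ball $B(x,r)$ can meet infinitely many cylinders of the same generation, since the partition $\{I_n\}$ accumulates at an endpoint of $(0,1)$; consequently one cannot simply bound $\mu(B(x,r))$ by a uniformly bounded number of cylinder masses. Here the remaining hypotheses enter: the uniform expansion (property $(3)$) forces cylinder diameters to contract at a uniform exponential rate, so that for $\mu$-a.e.\ $x$ the generation $n(x,r)$ at which $|C_n(x)|\approx r$ varies slowly in $r$ and consecutive scales remain comparable, while the monotonicity hypotheses $(1)$--$(2)$ and the summability condition $(5)$ control the geometry of the partition near its accumulation point so that, combined with the finiteness of $h(\mu)$, the $\mu$-mass of the infinitely many small neighbouring cylinders swept out by $B(x,r)$ is negligible on the logarithmic scale. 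Assembling these estimates shows that the ball local dimension equals $h(\mu)/\chi(\mu)$ almost everywhere, and the mass distribution principle then delivers the lower bound, completing the proof.
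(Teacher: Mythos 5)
First, a remark on the comparison itself: the paper offers no proof of Proposition \ref{volume lemma} --- it simply cites \cite{MU} for the general setting and \cite{KP} for the Gauss map --- so your attempt has to be measured against the standard volume-lemma argument. Your first half is sound: since the refined partition entropies are monotone, $h(\mu)<\infty$ forces $H(\mu,\{I_n\})<\infty$, so Shannon--McMillan--Breiman applies; Birkhoff together with the Renyi distortion estimate and the mean value theorem gives $-\frac{1}{n}\log|C_n(x)|\to\chi(\mu)$ a.e.; and your upper bound via cylinder covers (whose diameters shrink uniformly by property (3), so they are admissible covers at every scale) correctly yields $\dim\mu\leq h(\mu)/\chi(\mu)$. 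Two minor caveats: your Birkhoff step presumes $\log|T'|\in L^{1}(\mu)$, so the case $\chi(\mu)=\infty$ (where the statement reads $\dim\mu=0$) needs a separate word, and this is exactly the regime the paper handles elsewhere via Theorem \ref{Lyapunov bound}.

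The genuine gap is in your lower bound, and it sits precisely at the point you flag and then wave through. You claim that properties (1), (2) and (5), together with $h(\mu)<\infty$, make ``the $\mu$-mass of the infinitely many small neighbouring cylinders swept out by $B(x,r)$ negligible on the logarithmic scale.'' But (1), (2) and (5) constrain only the \emph{lengths} $|I_n|$; they say nothing about the \emph{masses} $\mu(I_n)$ of an arbitrary ergodic invariant measure, and a priori the neighbours of $C_n(x)$ inside $B(x,r)$ could carry mass as large as $r^{\gamma}$ with $\gamma<h(\mu)/\chi(\mu)$. Ruling this out at a typical $x$ is essentially the local dimension statement you are trying to prove, so the argument as sketched is circular. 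The standard repair (this is how \cite{MU} proceeds in spirit) avoids estimating neighbours altogether by shrinking the ball until it lies in a single cylinder: bounded distortion gives $\mathrm{dist}(x,\partial C_n(x))\geq D^{-1}|C_n(x)|\,\mathrm{dist}(T^n x,\{0,1\})$; fixing $\delta>0$ with $\mu\{y:\mathrm{dist}(y,\{0,1\})>\delta\}>0$, ergodicity supplies visit times $n_j$ of positive frequency (so $n_{j+1}/n_j\to 1$) at which $\mathrm{dist}(T^{n_j}x,\{0,1\})>\delta$, whence $B(x,D^{-1}\delta|C_{n_j}(x)|)\subset C_{n_j}(x)$ and $\mu(B(x,r_j))\leq\mu(C_{n_j}(x))\approx e^{-h(\mu)n_j}$ with $r_j\approx e^{-\chi(\mu)n_j}$; interpolating between consecutive scales $r_{j+1}\leq r\leq r_j$ then gives $\liminf_{r\to 0}\log\mu(B(x,r))/\log r\geq h(\mu)/\chi(\mu)$ a.e., and the mass distribution principle finishes. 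Without some such containment argument --- which is an idea genuinely missing from your proposal, not a technicality --- the lower bound is unproven.
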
 
As a consequence of the Renyi condition we have the following useful bounded distortion property.

\begin{lemma}
	\label{bounded distortion}
	Suppose $T$ satisfies properties $(1)-(5)$. There exists a uniform constant $C>0$ such that for any finite word $(a_1, ..., a_n)$, 
	$$-C \leq \log \Big|\frac{ (T^n)'(x)}{(T^n)'(y)}\Big|\leq C$$
	for any $x,y \in I_{a_1,\ldots,a_n}$.
\end{lemma}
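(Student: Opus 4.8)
The plan is to prove the estimate by writing the log-ratio as a sum over the orbit via the chain rule, bounding each summand using the Renyi condition $(4)$, and then summing a geometric series whose decay is supplied by the uniform expansion $(3)$. By symmetry in $x$ and $y$ it suffices to establish the upper bound, since interchanging $x$ and $y$ negates the quantity.

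First I would fix $x,y\in I_{a_1,\ldots,a_n}$ and set $x_k=T^k x$, $y_k=T^k y$ for $0\le k\le n-1$. Since applying $T$ shifts the coding, both $x_k$ and $y_k$ lie in the cylinder $I_{a_{k+1},\ldots,a_n}\subseteq I_{a_{k+1}}$. The chain rule gives
\[
\log\Big|\frac{(T^n)'(x)}{(T^n)'(y)}\Big|=\sum_{k=0}^{n-1}\big(\log|T'(x_k)|-\log|T'(y_k)|\big),
\]
and applying the mean value theorem to $t\mapsto \log|T'(t)|$ produces a point $\xi_k$ between $x_k$ and $y_k$ (hence in $I_{a_{k+1}}$) with
\[
\big|\log|T'(x_k)|-\log|T'(y_k)|\big|=\frac{|T''(\xi_k)|}{|T'(\xi_k)|}\,|x_k-y_k|.
\]
Here the monotonicity and expansion hypotheses guarantee $T'$ does not vanish on the relevant cylinders, so each logarithm is well defined.

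Next I would absorb the factor $|T'(\xi_k)|^{-1}$ using $(4)$. Applying the mean value theorem to $T$ on $I_{a_{k+1},\ldots,a_n}$, which $T$ maps bijectively onto $I_{a_{k+2},\ldots,a_n}$, yields a point $\zeta_k\in I_{a_{k+1},\ldots,a_n}\subseteq I_{a_{k+1}}$ with $|T'(\zeta_k)|\,|I_{a_{k+1},\ldots,a_n}|=|I_{a_{k+2},\ldots,a_n}|$. The Renyi condition $(4)$, applied with the triple $\xi_k,\xi_k,\zeta_k\in I_{a_{k+1}}$, then gives $|T''(\xi_k)|/|T'(\xi_k)|\le \kappa\,|T'(\zeta_k)|$. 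Combining this with $|x_k-y_k|\le |I_{a_{k+1},\ldots,a_n}|$, each summand is bounded by $\kappa\,|I_{a_{k+2},\ldots,a_n}|$.

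It remains to sum these bounds, and this is where the main obstacle lies: one must obtain geometric decay of the cylinder lengths \emph{without} circularly invoking the bounded distortion being proved. I would extract this directly from $(3)$ together with the nesting property $I_{a_1,\ldots,a_m}\subseteq I_{a_1,\ldots,a_{m'}}$ for $m'\le m$. Indeed, given a word $w$, its prefix cylinder of length $m_0:=l\lfloor |w|/l\rfloor$ satisfies $|I_{w|_{m_0}}|\le\Lambda^{-\lfloor |w|/l\rfloor}$, since $T^{m_0}$ maps it bijectively onto $(0,1)$ and, by $(3)$, the derivative of $T^{m_0}$ at the mean value point factors into $\lfloor|w|/l\rfloor$ blocks each of size at least $\Lambda$; nesting transfers this bound to $I_w$, giving $|I_w|\le\Lambda^{\,1-|w|/l}$. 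Hence $|I_{a_{k+2},\ldots,a_n}|\le\Lambda^{\,1-(n-k-1)/l}$ decays geometrically in $n-k-1$, and summing over $k$ gives
\[
\sum_{k=0}^{n-1}\kappa\,|I_{a_{k+2},\ldots,a_n}|\le \kappa\Lambda\sum_{m=0}^{\infty}\Lambda^{-m/l}=\frac{\kappa\Lambda}{1-\Lambda^{-1/l}}=:C,
\]
a finite constant independent of $n$ and of the word $(a_1,\ldots,a_n)$. This establishes the upper bound, and the lower bound follows by symmetry.
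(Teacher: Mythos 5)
Your proof is correct, and it is complete at the one point where such arguments often go wrong. The paper itself offers no proof of this lemma: it is stated bare, as a standard consequence of the Renyi condition, so there is no argument in the paper to compare against; what you have written is the classical Renyi distortion argument that the authors implicitly invoke. Your chain-rule decomposition, the mean value theorem for $\log|T'|$, and the use of condition $(4)$ with the triple $(\xi_k,\xi_k,\zeta_k)$ to convert $|T''(\xi_k)|/|T'(\xi_k)|$ into $\kappa\,|T'(\zeta_k)|$ and hence, via a second mean value point, into $\kappa\,|I_{a_{k+2},\ldots,a_n}|$, are all sound, and both points fed into $(4)$ do lie in the same branch interval $I_{a_{k+1}}$ as required. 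Most importantly, you correctly identified and avoided the circularity trap: the geometric decay $|I_w|\leq \Lambda^{1-|w|/l}$ is extracted directly from the uniform expansion of the iterate $T^l$ via the mean value theorem on a prefix cylinder of length a multiple of $l$, together with nesting, rather than from a distortion estimate. One cosmetic remark: your parenthetical justification that $T'$ does not vanish is a little loose as stated (monotonicity alone does not give it), but non-vanishing follows immediately from either $(3)$ (a zero of $T'$ at a point of some $I_n$ would force $(T^l)'$ to vanish at a suitable preimage) or from $(4)$ (taking $y=z=\xi$ with $T'(\xi)=0$ would force $T''\equiv 0$ on $I_n$, contradicting bijectivity), so this is not a gap.
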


\section{Proof of Theorem \ref{Main theorem}}

When studying $\dim(\mu_\p)$ there are two cases that naturally arise. The case when $h(\mu_\p)$ is finite, and the case when $h(\mu_\p)$ is infinite. We start this section by obtaining a upper bound for $\dim (\mu_\p)$ when $h(\mu_\p)=\infty$.

\subsection{The case where $h(\mu_\p)=\infty$} In this section we prove the following proposition.

\begin{proposition}
	\label{infinite h}
	If $h(\mu_\p)=\infty$ then $\dim (\mu_\p)\leq s_0.$
\end{proposition}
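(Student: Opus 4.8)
The plan is to bound the \emph{lower local dimension} of $\mu_\p$ from above almost everywhere and then invoke the standard principle that if $\underline{d}_\mu(x):=\liminf_{r\to 0}\frac{\log\mu(B(x,r))}{\log r}\le s$ for $\mu$-almost every $x$, then $\dim(\mu)\le s$ (take $A=\{x:\underline d_\mu(x)\le s\}$; this set has full measure and $\dim_H(A)\le s$). Since $s_0=\inf\{s:\sum_n|I_n|^s<\infty\}$, it suffices to prove that $\underline d_{\mu_\p}(x)\le s$ almost everywhere for each fixed $s\in(s_0,1)$ and then let $s\downarrow s_0$. Throughout I abbreviate $Z_s:=\sum_n|I_n|^s<\infty$.

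The first step is to extract a usable consequence of the hypothesis $h(\mu_\p)=\infty$. Comparing $\p$ with the probability vector $q_n:=|I_n|^s/Z_s$ through the Gibbs inequality $-\sum_n p_n\log p_n\le-\sum_n p_n\log q_n$ yields
\begin{equation*}
h(\mu_\p)\le -s\sum_n p_n\log|I_n|+\log Z_s.
\end{equation*}
Hence $h(\mu_\p)=\infty$ forces $\beta:=-\sum_n p_n\log|I_n|=\infty$, and by the one-step estimate $\log|T'(x)|=-\log|I_n|+O(1)$ for $x\in I_n$ (Lemma \ref{bounded distortion}) this is the same as $\chi(\mu_\p)=\infty$. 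In particular, along $\mu_\p$-almost every orbit the cylinders shrink superexponentially, $\frac1n\log|I_{a_1\ldots a_n}|\to-\infty$, so large digits (small intervals $|I_{a_j}|$) occur with heavy frequency.

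The geometric heart of the argument exploits the one-sided accumulation of cylinders built into the hypotheses on $\{I_n\}$. Fix $x=\Pi((a_i))$, write $P=I_{a_1\ldots a_{n-1}}$ and $\pi=\mu_\p(P)=\prod_{j<n}p_{a_j}$. The children $I_{a_1\ldots a_{n-1}k}=\phi_{a_1\ldots a_{n-1}}(I_k)$ accumulate monotonically at one endpoint of $P$, so the tail packet $\bigcup_{k\ge a_n}I_{a_1\ldots a_{n-1}k}$ is a subinterval containing $x$ of diameter $\asymp|P|\sum_{k\ge a_n}|I_k|$ and of $\mu_\p$-mass $\pi\sum_{k\ge a_n}p_k$. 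Consequently, at the scale $r_n\asymp|P|\sum_{k\ge a_n}|I_k|$ we obtain $\mu_\p(B(x,r_n))\ge \pi\sum_{k\ge a_n}p_k$. This beats the naive single-cylinder estimate $\mu_\p(I_{a_1\ldots a_n})=\pi\,p_{a_n}$ precisely because the tail mass $\sum_{k\ge a_n}p_k$ can be vastly larger than $p_{a_n}$; it is this clustering of infinitely many children near one endpoint that should pin the local dimension down to the geometric value $s_0$ rather than the (possibly much larger) cylinder ratio. Taking logarithms and using the uniform distortion bound to write $\log|P|=\sum_{j<n}\log|I_{a_j}|+O(n)$, the target inequality $\mu_\p(B(x,r_n))\ge r_n^s$ reduces to showing that
\begin{equation*}
\sum_{j<n}\big(\log p_{a_j}-s\log|I_{a_j}|\big)\ +\ \Big(\log\textstyle\sum_{k\ge a_n}p_k-s\log\sum_{k\ge a_n}|I_k|\Big)\ \ge\ O(n)
\end{equation*}
for infinitely many $n$, almost surely. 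Here the first sum is a random walk with independent increments $X_j=\log p_{a_j}-s\log|I_{a_j}|$, whose negative part is integrable since $\sum_n p_n\log^+(|I_n|^s/p_n)\le\sum_n|I_n|^s=Z_s<\infty$, while the subadditivity $\big(\sum_{k\ge K}|I_k|\big)^s\le\sum_{k\ge K}|I_k|^s$ together with the finiteness of $Z_s$ controls the tail term. I would establish this infinitely-often statement by a Borel--Cantelli argument, using that $\beta=\infty$ guarantees the heavy occurrence of large digits needed to make the tail term favourable.

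\textbf{Main obstacle.} The delicate point is this last combination step. Because $h(\mu_\p)=\infty$ (and hence $\beta=\infty$), the increments $X_j$ may have infinite mean and their partial sums can fluctuate wildly, so the walk cannot be controlled by a naive law of large numbers; moreover the per-step distortion contributes an error of size $O(n)$ that must be dominated. Making the Borel--Cantelli estimate uniform — so that the favourable tail-packet scales genuinely occur infinitely often for almost every $x$ while the $s$-cost stays below $r_n^s$ — is where the real work lies, and is exactly the place where the finiteness $\sum_n|I_n|^s<\infty$ for $s>s_0$ is used decisively.
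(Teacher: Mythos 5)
Your opening moves are sound: the reduction to a lower local dimension bound is a legitimate route to $\dim(\mu_\p)\le s_0$, and your Gibbs-inequality step showing that $h(\mu_\p)=\infty$ forces $\beta=-\sum_n p_n\log|I_n|=\infty$, hence $\chi(\mu_\p)=\infty$, is precisely the paper's Lemma \ref{Ruelle Margulis} (proved there with Jensen's inequality rather than Gibbs', with $q_n\propto|I_n|$). After that the two arguments genuinely diverge, and yours, as written, has a real gap exactly where you flag it. The paper never touches local dimensions: from $\chi(\mu_\p)=\infty$ and the Birkhoff ergodic theorem it deduces that $\mu_\p$-a.e.\ point lies in the fast-decay set $E_\lambda$ for \emph{every} $\lambda>0$, and then finishes by importing the Kifer--Peres--Weiss covering estimate $\dim_H(E_\lambda)\le\inf_{s_0<s<1}\left(s+q(s)/\lambda\right)$ of Theorem \ref{Lyapunov bound}. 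You instead need an ``infinitely often'' lower bound on a random walk plus a tail term, and you only assert it: no Borel--Cantelli argument is actually set up, no conditional probabilities of the favourable events are estimated, and the tail term $\log\sum_{k\ge a_n}p_k-s\log\sum_{k\ge a_n}|I_k|$ is never bounded below on the digits the orbit actually visits. As submitted, the proposal is a plan with its decisive step missing.

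The gap is, however, closable with ingredients you already have --- and the tail-packet device is the culprit, not the saviour. Drop it and work at plain cylinder scales $r_n=2|J_n(x)|$: then $B(x,r_n)\supseteq\overline{J_n(x)}$, so $\log\mu_\p(B(x,r_n))\ge\sum_{j\le n}\log p_{a_j}$, while $\log r_n=\sum_{j\le n}\log|I_{a_j}|+O(n)$ by Lemma \ref{bounded distortion}. Set $\chi_n=\sum_{j\le n}\log(1/|I_{a_j}|)$ and $S_n=\sum_{j\le n}X_j$ with your $X_j=\log p_{a_j}-s\log|I_{a_j}|$. Your own estimate $E[X_1^-]\le Z_s<\infty$ is the key: the walk's \emph{downside} is integrable, which is the only side you need, so applying the ordinary strong law to the integrable variables $X_j\wedge 0$ gives $S_n\ge -(Z_s+1)n$ eventually a.s.; and $\beta=\infty$ gives $\chi_n/n\to\infty$ a.s.\ by truncation (this superlinearity --- your ``superexponential shrinking'' --- is exactly what absorbs the $O(n)$ distortion error, a connection your sketch gestures at but never makes). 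Hence a.s.
\[
\frac{\log\mu_\p(B(x,r_n))}{\log r_n}\ \le\ \frac{s\chi_n-S_n}{\chi_n-O(n)}\ \le\ \frac{s\chi_n+(Z_s+1)n}{\chi_n-O(n)}\ \longrightarrow\ s,
\]
so $\underline{d}_{\mu_\p}(x)\le s$ almost everywhere, and your stated reduction finishes by letting $s\downarrow s_0$. (Even your literal i.o.\ statement is true and provable without heavy machinery: along the a.s.\ infinitely many times with $a_n$ equal to a fixed digit of positive mass --- second Borel--Cantelli with genuine independence --- the tail term is a constant, while $S_{n-1}\ge-(Z_s+1)n$; but the tail packet is simply unnecessary.) Completed this way, your route becomes a genuinely different and more elementary proof than the paper's, since it avoids importing Theorem \ref{Lyapunov bound} entirely and never uses the one-sided accumulation of the intervals $I_n$ on which your tail-packet estimate relied; what the paper's route buys in exchange is that, given the KPW theorem, no probabilistic estimates beyond Birkhoff are needed at all.
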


We start by proving that $h(\mu_\p)=\infty$ implies $\chi(\mu_\p)=\infty$. Our proof is an adaptation of Lemma $3.1$ from \cite{FLM}.

\begin{lemma}
	\label{Ruelle Margulis}
	If $h(\mu_\p)=\infty$ then $\chi(\mu_\p)=\infty.$
\end{lemma}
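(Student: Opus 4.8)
The plan is to bound the Lyapunov exponent $\chi(\mu_\p)$ below by a weighted sum of the quantities $-\log|I_n|$, and then to show that this sum diverges whenever the entropy $h(\mu_\p)=-\sum_n p_n\log p_n$ diverges, exploiting the summability hypothesis $(5)$. First I would reduce $\chi(\mu_\p)$ to a series over the first digit. Since $T$ maps each $I_n$ bijectively onto $(0,1)$, the Lebesgue average of $|T'|$ over $I_n$ equals $1/|I_n|$, so by the intermediate value theorem there is $\xi_n\in I_n$ with $|T'(\xi_n)|=1/|I_n|$. Applying the bounded distortion estimate of Lemma \ref{bounded distortion} (with a single digit) gives $\log|T'(x)|\geq -\log|I_n|-C$ for every $x\in I_n$. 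Integrating against $\mu_\p$, and using $\mu_\p(I_n)=p_n$ together with $\sum_n p_n=1$, I obtain
\[
\chi(\mu_\p)=\sum_{n=1}^\infty\int_{I_n}\log|T'|\,\mathrm{d}\mu_\p\;\geq\;\sum_{n=1}^\infty p_n\bigl(-\log|I_n|\bigr)-C.
\]
Since each $|I_n|<1$, every term in this series is nonnegative, so it suffices to prove that $\sum_n p_n(-\log|I_n|)=\infty$.

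The heart of the argument is a comparison of $-\log|I_n|$ with $-\log p_n$ driven by condition $(5)$. I would fix exponents $s_0<s'<s<1$, which is possible because $s_0<1$; for such $s$ we have $\sum_n|I_n|^s<\infty$ by \eqref{s0}. Split $\N$ into $B:=\{n:p_n>|I_n|^s\}$ and its complement $A$. On $B$ the inequality $|I_n|<p_n^{1/s}$ yields $-\log|I_n|>\frac{1}{s}(-\log p_n)$, whence $\sum_{n\in B}p_n(-\log|I_n|)\geq \frac{1}{s}\sum_{n\in B}p_n(-\log p_n)$. On $A$, for all but the finitely many $n$ with $p_n\geq 1/e$ or $|I_n|^s\geq 1/e$, the monotonicity of $t\mapsto -t\log t$ on $(0,1/e)$ gives $-p_n\log p_n\leq -|I_n|^s\log|I_n|^s=s|I_n|^s\log(1/|I_n|)$. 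Using the elementary bound $\sup_{t\in(0,1)}t^{\,s-s'}\log(1/t)<\infty$, this is at most a constant multiple of $|I_n|^{s'}$, and since $\sum_n|I_n|^{s'}<\infty$ I conclude $\sum_{n\in A}-p_n\log p_n<\infty$.

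Combining these, if $h(\mu_\p)=\infty$ then the divergence must be carried by $B$, that is $\sum_{n\in B}p_n(-\log p_n)=\infty$; the bound on $B$ then forces $\sum_{n\in B}p_n(-\log|I_n|)=\infty$, and hence $\chi(\mu_\p)=\infty$ by the display above. I expect the main obstacle to be the treatment of the set $A$: one must rule out the possibility that the entropy diverges through digits $n$ for which $|I_n|$ is comparatively large, so that $-\log|I_n|$ is too small to detect the divergence. This is exactly where condition $(5)$ is essential, through the auxiliary exponent $s'$ and the monotonicity of $-t\log t$; the constant $C$ from bounded distortion and the finitely many large-$p_n$ terms are harmless, since they contribute only finite amounts that cannot affect whether the series is infinite.
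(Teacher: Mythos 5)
Your proof is correct, and it takes a genuinely different route from the paper's. Both arguments share the same first reduction: via the mean value theorem and the bounded distortion of Lemma \ref{bounded distortion}, one gets $\chi(\mu_\p)\geq \sum_n p_n\log (1/|I_n|)-C$, which is exactly \eqref{bounda} in the paper. From there the paper compares $-\sum_n p_n\log|I_n|$ with $-\sum_n p_n\log p_n$ in a single stroke: it applies Jensen's inequality (concavity of $\log$) to the partial sums $\sum_{n\leq N}p_n\log\bigl(|I_n|/p_n\bigr)$, and since the intervals are disjoint subintervals of $(0,1)$ the resulting upper bound stays bounded as $N\to\infty$ (it tends to $0$ when $\sum_n|I_n|=1$) --- in effect the nonnegativity of the Kullback--Leibler divergence between $\p$ and the length distribution. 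Notably, the paper's proof never invokes condition $(5)$. Your proof instead leans on $(5)$: you split the digits according to whether $p_n>|I_n|^s$ or not, for auxiliary exponents $s_0<s'<s<1$, use monotonicity of $-t\log t$ on $(0,1/e)$ together with $\sup_{t\in(0,1)}t^{s-s'}\log(1/t)<\infty$ to show that the entropy carried by the small-$p_n$ digits is uniformly finite, and conclude that a divergent entropy must be carried by digits where $-\log|I_n|\geq \frac{1}{s}(-\log p_n)$. All your steps check out: the finitely many exceptional digits with $p_n\geq 1/e$ or $|I_n|^s\geq 1/e$ each contribute at most $1/e$, and $|I_n|\to 0$ guarantees there are only finitely many of the latter. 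As for what each approach buys: the paper's Jensen argument is shorter and holds under weaker hypotheses (summability of the lengths themselves, which here is automatic, rather than of their $s'$-powers); your argument costs the extra use of $(5)$ but is quantitative --- unwinding it gives the uniform affine bound $h(\mu_\p)\leq s\,\chi(\mu_\p)+K(s,s')$ for every $s\in(s_0,1)$ and every $\p$, which not only re-proves the lemma but is consistent with, and morally explains, Proposition \ref{infinite h}: measures with very large Lyapunov exponent have dimension forced down towards $s_0$.
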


\begin{proof}
We start by remarking that by Lemma \ref{bounded distortion} and the mean value theorem we have 
\begin{equation}
\label{bounda}\chi(\mu_\p)=\int \log |T'|\mathrm{d}\mu\geq  \sum_{n=1}^{\infty}\mu_\p(I_n)\log \frac{1}{|I_n|} -C
\end{equation} for some constant $C>0$. We observe that for any $N\in\mathbb{N}$ we have
\begin{align*}
-\sum_{n=1}^N\mu_{\p}(I_n)\log \mu_{\p}(I_n) + \sum_{n=1}^{N}\mu_\p(I_n)\log |I_n|& = \sum_{n=1}^{N}\mu_\p(I_n)\log \frac{|I_n|}{\mu_\p(I_n)}\\
&=\sum_{n=1}^{N}\mu_\p(I_n)\cdot \sum_{n=1}^{N}\frac{\mu_\p(I_n)}{\sum_{n=1}^{N}\mu_\p(I_n)}\log \frac{|I_n|}{\mu_\p(I_n)}\\
& \leq \sum_{n=1}^{N}\mu_\p(I_n)\cdot \log \Big( \sum_{n=1}^N \frac{|I_n|}{\sum_{n=1}^N\mu_\p(I_n)}\Big)
\end{align*}
In our final step we used Jensen's inequality and the fact that $\log$ is a concave function. Since $\sum_{n=1}^{\infty}|I_n|=1$ our upper bound converges to $0$ as $N\to \infty$ . It follows therefore that if $-\sum_{n=1}^{\infty}\mu_{\p}(I_n)\log \mu_{\p}(I_n)=\infty$ then $-\sum_{n=1}^{\infty}\mu_\p(I_n)\log |I_n|=\infty$. By \eqref{bounda} this implies that if $h(\mu_\p)=\infty$ then $\chi(\mu_\p)=\infty$.
\end{proof}

Let $J_n(x)=I_{a_1,\ldots,a_n}$ if $x\in I_{a_1,\ldots,a_n}.$ Given $\lambda>0,$ we let $$E_{\lambda}:=\bigcap_{j=1}^{\infty}\bigcup_{n=j}^{\infty}\big\{x\in(0,1):|J_{n}(x)|\leq e^{-\lambda n}\big\}.$$ Given $s\in(0,1)$ let $$K_T(s):=\sup_{x\in\bigcup_{n}I_{n}}\sum_{y:T(y)=x}|T'(y)|^{-s}.$$ Note that it follows from Lemma \ref{bounded distortion} that $\sum_{n=1}^{\infty}|I_n|^s<\infty$ if and only if $K_{T}(s)<\infty$.
The following theorem was established in \cite{KPW}.

\begin{theorem}
	\label{Lyapunov bound}
Assume that $T$ satisfies properties $(1)-(5)$. For $s\in(s_0,1)$ we have $$q(s):=\lim_{n\to\infty}\frac{1}{n}\log \int |(T^n)'(x)|^{1-s} \mathrm{d}x \leq K_{T}(s).$$ Moreover, for any $\lambda>0$ we have $$\dim_{H}(E_\lambda)\leq \inf_{s_0<s<1} \Big(s+\frac{q(s)}{\lambda}\Big).$$
\end{theorem}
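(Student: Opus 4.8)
The plan is to establish the two assertions separately. For the inequality $q(s)\le K_T(s)$, I would pass from the integral $\int_0^1 |(T^n)'(x)|^{1-s}\,\mathrm dx$ to a sum over cylinders of generation $n$ by a change of variables, and then control this sum by a submultiplicativity argument that identifies $K_T(s)$ as the controlling constant. For the dimension estimate, I would exploit the fact that $E_\la$ is a $\limsup$ set, cover it efficiently by cylinders, and bound the resulting Hausdorff sum using the estimate from the first part.

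For the first inequality, decompose $(0,1)$ into cylinders of generation $n$ and change variables $x = \phi_{a_1,\ldots,a_n}(u)$ on each $I_{a_1,\ldots,a_n}$. Since $|(T^n)'(\phi_{a_1,\ldots,a_n}(u))| = |\phi_{a_1,\ldots,a_n}'(u)|^{-1}$ and $\mathrm dx = |\phi_{a_1,\ldots,a_n}'(u)|\,\mathrm du$, one obtains
\begin{equation*}
\int_0^1 |(T^n)'(x)|^{1-s}\,\mathrm dx = \sum_{(a_1,\ldots,a_n)}\int_0^1 |\phi_{a_1,\ldots,a_n}'(u)|^{s}\,\mathrm du.
\end{equation*}
Writing $\phi_{a_1,\ldots,a_n} = \phi_{a_1,\ldots,a_{n-1}}\circ\phi_{a_n}$ and applying the chain rule, a straightforward induction on $n$ shows that $\sum_{(a_1,\ldots,a_n)}|\phi_{a_1,\ldots,a_n}'(u)|^s \le K_T(s)^n$ for every $u\in(0,1)$, the inductive step using that $\sum_{a}|\phi_a'(x)|^s = \sum_{y:T(y)=x}|T'(y)|^{-s}\le K_T(s)$ uniformly in $x$. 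Integrating over $u$ gives $\int_0^1 |(T^n)'(x)|^{1-s}\,\mathrm dx \le K_T(s)^n$, whence $q(s)\le \log K_T(s)\le K_T(s)$; note $q(s)$ is finite precisely because $s>s_0$ guarantees $K_T(s)<\f$.

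For the dimension bound, fix $s\in(s_0,1)$ and any $t > s + q(s)/\la$; I will show $\mathcal{H}^t(E_\la)=0$. For each $j$ the set $E_\la$ is covered by those closed cylinders $\overline{I_{a_1,\ldots,a_n}}$ with $n\ge j$ and $|I_{a_1,\ldots,a_n}|\le e^{-\la n}$, and these have diameters at most $e^{-\la j}\to 0$, so as $j$ grows they form covers of arbitrarily small mesh. Splitting $|I_{a_1,\ldots,a_n}|^t \le |I_{a_1,\ldots,a_n}|^s\, e^{-\la(t-s)n}$ on this collection and using bounded distortion (Lemma \ref{bounded distortion}) together with the first part to estimate $\sum_{(a_1,\ldots,a_n)}|I_{a_1,\ldots,a_n}|^s \le e^{Cs}\int_0^1|(T^n)'(x)|^{1-s}\,\mathrm dx \le e^{(q(s)+\ep)n}$ for $n$ large, the associated Hausdorff sum is bounded by a constant multiple of $\sum_{n\ge j} e^{(-\la(t-s)+q(s)+\ep)n}$. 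Choosing $\ep$ small makes the exponent negative, so this tends to $0$ as $j\to\f$; hence $\mathcal{H}^t(E_\la)=0$ and $\dim_H(E_\la)\le t$. Letting $t\downarrow s+q(s)/\la$ and then taking the infimum over $s\in(s_0,1)$ yields the claim.

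The routine steps are the change of variables and the bounded-distortion comparison of $|\phi_{a_1,\ldots,a_n}'|$ with $|I_{a_1,\ldots,a_n}|$; the one point requiring care is keeping all constants uniform in the cylinder, which is exactly what Lemma \ref{bounded distortion} provides. The only genuine subtlety is that the covering sum is finite and geometrically decaying only when $\la(t-s)>q(s)$ with $q(s)<\f$, which is why the restriction $s>s_0$ (equivalently $K_T(s)<\f$) is essential: this is what converts the crude submultiplicative bound of the first part into a usable dimension estimate.
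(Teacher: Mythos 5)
Your argument is correct, but there is no in-paper proof to compare it against: the paper does not prove Theorem \ref{Lyapunov bound} at all, it imports it wholesale from Kifer--Peres--Weiss (``The following theorem was established in \cite{KPW}''). So you have in effect reconstructed the argument of the cited source, and your route is the standard one: the change of variables giving $\int_0^1 |(T^n)'(x)|^{1-s}\,\mathrm dx=\sum_{\mathbf a\in\mathbb N^n}\int_0^1|\phi_{\mathbf a}'(u)|^s\,\mathrm du$; the induction with a bound \emph{uniform in the base point} (this uniformity is exactly what lets you evaluate the inductive hypothesis at $\phi_{a_n}(u)$, so the step is sound) yielding $K_T(s)^n$; the distortion comparison $\sum_{\mathbf a}|I_{\mathbf a}|^s\le e^{Cs}\int_0^1|(T^n)'(x)|^{1-s}\,\mathrm dx$ from Lemma \ref{bounded distortion}; and the covering of the $\limsup$ set $E_\lambda$ by level-$n$ cylinders with $|I_{\mathbf a}|\le e^{-\lambda n}$, $n\ge j$, with the split $|I_{\mathbf a}|^t\le|I_{\mathbf a}|^s e^{-\lambda(t-s)n}$. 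That split silently uses $t>s$, which is fine because uniform expansion forces $q(s)\ge 0$, but you should say so. Note also that what you prove, $q(s)\le\log K_T(s)$, is strictly stronger than the stated $q(s)\le K_T(s)$ (via $\log x\le x$).

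One genuine loose end: the theorem defines $q(s)$ as a \emph{limit}, and your proof only bounds it from above without establishing that the limit exists; your plan mentions submultiplicativity but the proof body never uses it for this purpose. The fix is cheap: by Lemma \ref{bounded distortion} and the chain rule, $S_n:=\int_0^1|(T^n)'(x)|^{1-s}\,\mathrm dx$ satisfies $S_{m+n}\le e^{Cs}S_m S_n$, so $\log\left(e^{Cs}S_n\right)$ is subadditive and Fekete's lemma gives the limit; alternatively, replace the limit by a $\limsup$ throughout, which is all your covering argument needs, since you only invoke $\sum_{\mathbf a\in\mathbb N^n}|I_{\mathbf a}|^s\le e^{(q(s)+\varepsilon)n}$ for all large $n$ (and here you should absorb the constant $e^{Cs}$ into the $\varepsilon$ and start the cover at $j$ large, which is trivial bookkeeping). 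With those touches the proof is complete and self-contained --- which is more than the paper itself provides for this statement.
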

Using Theorem \ref{Lyapunov bound} we may now prove Proposition \ref{infinite h}. 
\begin{proof}[Proof of Proposition \ref{infinite h}]
	Let $\mu_\p$ be such that $h(\mu_\p)=\infty$. By Lemma \ref{Ruelle Margulis} we must have $\chi(\mu_\p)=\infty$. It follows then from the Birkhoff ergodic theorem that for $\mu_\p$ a.e. $x$ we have $$\lim_{n\to\infty}\frac{1}{n}\sum_{j=0}^{n-1} \log |T'(T^j)(x)|=\infty.$$ By Lemma \ref{bounded distortion} and the chain rule, this implies that for any $\lambda>0,$ $\mu_\p$ a.e. $x$ is contained in $E_{\lambda}$. Therefore $\mu_\p$ gives full measure to $E_{\lambda}$ for any $\lambda>0$. Consequently $\dim \mu_\p \leq \dim_{H}(E_\lambda)$ for any $\lambda>0$. Applying Theorem \ref{Lyapunov bound} we may conclude that $\dim \mu_\p \leq s_0$.
\end{proof}

\subsection{The case where $h(\mu_\p)<\infty$}
We start this section by introducing some notation and proving two lemmas which describe how the entropy and the Lyapunov exponent of $\mu_\p$ change when mass is moved from the $n$-th coordinate to the first coordinate.

Given a probability vector $\p=(p_1, p_2, ...),$ for $n\geq 2$ and  $0\leq \epsilon \leq \min\{p_{n}, 1-p_1\}$ we let $\p_{\epsilon, n}$ denote the probability vector
$$\p_{\epsilon, n}= (p_1+\epsilon, p_2, ..., p_n-\epsilon, p_{n+1}, ...). $$ That is, $\p_{\epsilon, n}$ is the probability vector obtained from $\p$ when $\epsilon$ mass has been moved from the $n$th coordinate to the first coordinate.


\begin{lemma}
\label{entropy}
Let $\p$ be such that $h(\mu_{\p})<\infty$. Then $$\frac{d}{d\epsilon}(h(\mu_{\p})-h(\mu_{\pen}))= \log\left(\frac{p_1+\epsilon}{p_n-\epsilon}\right).$$
\end{lemma}

\begin{proof}
Fix a probability vector $\p$ such that $h(\mu_{\p})<\infty$. Then for $0\leq \epsilon \leq \min\{p_{n}, 1-p_1\}$ we have 
$$h(\mu_{\p})-h(\mu_{\pen})= -p_n\log p_n-p_1\log p_1 + (p_n-\epsilon)\log(p_n-\epsilon)+(p_1+\epsilon)\log(p_1+\epsilon).$$
Differentiating the right hand side of the above we obtain
\begin{align*}\frac{d}{d\epsilon}(h(\mu_{\p})-h(\mu_{\pen}))&=-\frac{p_n-\epsilon}{p_n-\epsilon} - \log (p_n-\epsilon)+\frac{p_1+\epsilon}{p_1 +\epsilon}+\log (p_1+\epsilon)\\
&=-1 - \log (p_n-\epsilon)+1+\log (p_1+\epsilon)\\ &=\log\left(\frac{p_1+\epsilon}{p_n-\epsilon}\right).
\end{align*}
\end{proof}Lemma \ref{entropy} tells us that for small values of $\epsilon,$ the change in entropy when we move $\epsilon$ mass from the $n$-th coordinate to the first coordinate is approximately $\epsilon \log\left(\frac{p_1+\epsilon}{p_n-\epsilon}\right).$ We now prove a complimentary statement for the Lyapunov exponent. In order to quantify how the Lyapunov exponents change under redistribution of measure, we need to estimate the difference  $\int \log|T^{\prime} |d\mu_{\p}- \int \log|T^{\prime}| d\mu_{\p_{\epsilon,n}}$. It will be easier to estimate this quantity by rewriting the integrals over a common measure. 

Fix a probability vector $\p$. Let $\Sigma_0= (\{0\} \cup \mathbb{N})^{\mathbb{N}}$ denote the space of sequences whose entries are either an element of the natural numbers, or equal to the extra digit zero. We equip $\Sigma_0$ with the shift map $\sigma_0: \Sigma_0 \to \Sigma_0$.

We define two projections $\Pi_1: \Sigma_0 \to \Sigma$ and $\Pi_2: \Sigma_0 \to \Sigma$ given by
$$
\Pi_1((b_i))=(a_i) \textrm{ where } \left\{  \begin{array}{ccc}
a_i=b_i & \textnormal{if} & b_i \neq 0 \\
a_i=1 & \textnormal{if} & b_i=0\\
\end{array}\right.
$$
and
$$
\Pi_2((b_i))=(a_i)\textrm{ where } \left\{  \begin{array}{ccc}
a_i=b_i & \textnormal{if} & b_i \neq 0 \\
a_i=n & \textnormal{if} & b_i=0.\\
\end{array}\right.
$$
Let $\nu$ be the Bernoulli measure on $\Sigma_0$ associated to the probability vector $(q_0, q_1, ...)=(\epsilon, p_1, ..., p_n-\epsilon, p_{n+1}, ...)$. We make here the important observation that $$\Pi_*(\Pi_{1*}(\nu))= \mu_{\pen}\textrm{ and }\Pi_*(\Pi_{2*}(\nu))= \mu_{\p}.$$

Finally, denote
\begin{eqnarray}
\tau_n= \inf_{x \in I_n} |T^{\prime}(x)|
\label{tau}
\end{eqnarray}

\begin{lemma}
Let $T$ satisfy properties $(1)-(5)$ and $\p$ be a probability vector. Then there exists a constant $0<\lambda<1,$ such that for all $n\in N$ and $\epsilon$ sufficiently small
$$\chi(\mu_{\p})-\chi(\mu_{\pen}) \geq \epsilon \log (\lambda \tau_n)$$
\label{lyapunov}
\end{lemma}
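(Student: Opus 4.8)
The plan is to exploit the coupling provided by the measure $\nu$ on $\Sigma_0$ to rewrite the difference of Lyapunov exponents as a single integral, and then to isolate the dominant contribution. Setting $g:=\log|T'\circ\Pi|$, the identities $\Pi_*(\Pi_{2*}\nu)=\mu_{\p}$ and $\Pi_*(\Pi_{1*}\nu)=\mu_{\pen}$ together with the change of variables formula give
\begin{equation*}
\chi(\mu_{\p})-\chi(\mu_{\pen})=\int_{\Sigma_0}\bigl(g(\Pi_2 b)-g(\Pi_1 b)\bigr)\,d\nu(b).
\end{equation*}
Since $\Pi_1 b$ and $\Pi_2 b$ differ precisely in the coordinates where $b_i=0$, and the first coordinate dictates which interval $\Pi(\cdot)$ lands in, I would split the integral according to whether $b_1=0$ or $b_1\neq 0$. (If $\chi(\mu_{\p})=\infty$ the stated bound is vacuous, so I may assume both exponents are finite.)

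On $\{b_1=0\}$, which has $\nu$-measure $\epsilon$, the point $\Pi(\Pi_2 b)$ lies in $I_n$ while $\Pi(\Pi_1 b)$ lies in $I_1$. Hence $g(\Pi_2 b)\geq\log\tau_n$ by the definition \eqref{tau} of $\tau_n$, whereas $g(\Pi_1 b)\leq\log M_1$ with $M_1:=\sup_{x\in I_1}|T'(x)|$. The key point is that Lemma~\ref{bounded distortion} applied with a word of length one forces $M_1<\infty$. This produces the main term
\begin{equation*}
\int_{\{b_1=0\}}\bigl(g(\Pi_2 b)-g(\Pi_1 b)\bigr)\,d\nu\geq \epsilon\bigl(\log\tau_n-\log M_1\bigr).
\end{equation*}

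The main obstacle is the complementary set $\{b_1\neq 0\}$, which has measure $1-\epsilon$: there $\Pi_1 b$ and $\Pi_2 b$ share their first symbol, but they still disagree at every later zero of $b$, so a crude distortion bound by the constant $C$ would contribute $O(1)$ rather than $O(\epsilon)$ and is useless. To get a bound of the correct order I would stratify $\{b_1\neq 0\}$ by the position $k\geq 2$ of the first zero after coordinate one; on $\{b_1\neq 0,\dots,b_{k-1}\neq 0,\,b_k=0\}$ both points lie in the common cylinder $I_{b_1,\dots,b_{k-1}}$. Writing $\log|T'(x)|-\log|T'(y)|=\int_y^x (T''/T')$ and using the Renyi condition in the form $|T''(t)/T'(t)|\leq\kappa|T'(t)|$ together with the monotonicity of $T$ on each branch, I obtain
\begin{equation*}
\bigl|g(\Pi_2 b)-g(\Pi_1 b)\bigr|\leq\kappa\,\bigl|T(\Pi\Pi_2 b)-T(\Pi\Pi_1 b)\bigr|\leq\kappa\,|I_{b_2,\dots,b_{k-1}}|.
\end{equation*}
Integrating against $\nu$ factors out one power of $\epsilon$ from the event $\{b_k=0\}$, and the uniform expansion property $(3)$ makes $|I_{b_2,\dots,b_{k-1}}|$ decay geometrically in $k$, so the resulting series over $k$ sums to a finite constant $C''$ independent of $n$ and of $\p$. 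This yields $\int_{\{b_1\neq 0\}}(g\circ\Pi_2-g\circ\Pi_1)\,d\nu\geq -\epsilon C''$.

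Combining the two estimates gives
\begin{equation*}
\chi(\mu_{\p})-\chi(\mu_{\pen})\geq \epsilon\bigl(\log\tau_n-\log M_1-C''\bigr)=\epsilon\log(\lambda\tau_n),\qquad \lambda:=\frac{e^{-C''}}{M_1}>0.
\end{equation*}
Since shrinking $\lambda$ only decreases the right-hand side, I may replace $\lambda$ by a smaller value if necessary to ensure $0<\lambda<1$, and the hypothesis that $\epsilon$ is sufficiently small is used only to guarantee that $\pen$ (equivalently, $\nu$) is a genuine probability vector. I expect the geometric-summation step controlling the $\{b_1\neq 0\}$ contribution to be the delicate part, as it is precisely what converts a potentially $O(1)$ error into a bound of order $O(\epsilon)$ that is uniform in $n$.
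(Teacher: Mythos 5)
Your proof is correct, but it takes a genuinely different route from the paper's. The paper splits into orientation-preserving and orientation-reversing cases: in the first, the monotonicity of $T'$ (condition (1)) together with the pointwise ordering $\Pi\circ\Pi_2\geq\Pi\circ\Pi_1$ shows that the contribution of $\{b_1\neq 0\}$ is nonnegative, so it is simply discarded with no quantitative estimate; in the second, a telescoping argument using the shift-invariance of $\nu$ and the second-iterate hypothesis (condition (2)) reduces everything to the cylinders $[00]$ and $[k0]$. You instead bound the $\{b_1\neq 0\}$ contribution in absolute value: stratifying by the position $k$ of the first zero, the Renyi condition in the form $|T''/T'|\leq\kappa|T'|$ controls the integrand by $\kappa\,|I_{b_2,\ldots,b_{k-1}}|$, the product structure of $\nu$ extracts one factor of $\epsilon$, and uniform expansion makes the series over $k$ sum to a constant $C''$ uniform in $n$ and $\p$. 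The trade-off: the paper's preserving case is softer (a pure sign argument needing no decay estimate), but it pays with the case split and the somewhat ad hoc condition (2); your argument treats both orientations simultaneously and never invokes conditions (1) or (2) at all --- notable because this lemma is the only place the paper's argument uses (2), and the authors explicitly state they expect Theorem \ref{Main theorem} to hold without it, so your route would deliver exactly that strengthening. Two details to pin down to make it airtight: the geometric decay $\sup_{|w|=m}|I_w|\leq A\theta^{m}$ requires, besides condition (3), a uniform lower bound $\inf|T'|\geq e^{-C}$ to handle iterates of length less than $l$, which follows from Lemma \ref{bounded distortion} and the mean value theorem (the same device that gives your $M_1<\infty$); and when both Lyapunov exponents are infinite the difference must be read through the coupling integral $\int(g\circ\Pi_2-g\circ\Pi_1)\,d\nu$, which your estimates show is always finite --- the paper passes over this point in the same implicit way.
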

\begin{proof}Let us start by fixing $s>s_0$. We split our proof into two cases: when $T$ is orientation preserving and when $T$ is orientation reversing. In both cases we will assume $T'$ is increasing. The case where $T'$ is decreasing is handled similarly. Note that under the assumption $T'$ is increasing and $T$ is orientation preserving the defining intervals $\{I_n=(a_n,b_n)\}\}$ must satisfy $a_1=0$ and $b_{n}\to 1$. Likewise if $T'$ is increasing and $T$ is orientation reversing the defining intervals $\{I_n=(a_n,b_n)\}$ must satisfy $b_1=1$ and $a_{n}\to 0.$
	\\

\textbf{Case $1$ ($T$ is orientation preserving).}
Let $\Pi_1$, $\Pi_2$ and $\nu$ be as above. Since the branches of $T$ are orientation preserving and $T'$ is increasing we have 
\begin{equation}
\label{projection increase}
\Pi\circ \Pi_2((b_i)) \geq \Pi \circ \Pi_1((b_i))
\end{equation}
for all $(b_i) \in \Sigma_0$. Given a finite word $\mathbf{j}=(j_1,\ldots,j_k)\in\{\{0\}\cup\mathbb{N}\}^k,$ we define the cylinder set determined by $\mathbf{j}$ to be $$[\mathbf{j}]:=\{(b_i)\in \Sigma_0:(b_1,\ldots,b_k) =\mathbf{j}\}.$$We observe
\begin{align}
\label{lyap1}
\int \log|T^{\prime} |d\mu_{\p}- \int \log|T^{\prime}| d\mu_{\p_{\epsilon,n}} &= \int \log |T^{\prime} \circ \Pi \circ \Pi_2| - \log |T^{\prime} \circ \Pi \circ \Pi_1| d\nu \nonumber\\ 
&=\sum_{i=0}^{\infty}  \int_{[i]} \log\left|\frac{T^{\prime} \circ \Pi \circ \Pi_2}{T^{\prime} \circ \Pi \circ \Pi_1}\right|d\nu \nonumber\\
&\geq  \int_{[0]} \log\left|\frac{T^{\prime} \circ \Pi \circ \Pi_2}{T^{\prime} \circ \Pi \circ \Pi_1}\right|d\nu.
\end{align}In the final inequality we used the fact that $T^{\prime}$ is increasing and \eqref{projection increase}. Since the absolute value of the derivative of any point in $I_1$ can be bounded above by a constant $C'>0$ that does not depend upon $n,$ and by the definition of $\tau_n$ in (\ref{tau}), we have
\begin{equation}
\label{conclude}
\int_{[0]} \log\left|\frac{T^{\prime} \circ \Pi \circ \Pi_2}{T^{\prime} \circ \Pi \circ \Pi_1}\right|d\nu\geq \int_{[0]} \log \frac{\tau_n}{C'}d\nu=\nu([0])\log \frac{\tau_n}{C'}=\epsilon \log \frac{\tau_n}{C'}.
\end{equation} Combining \eqref{lyap1} with \eqref{conclude} implies our result.
\\

\textbf{Case $2$ ($T$ is orientation reversing).} We define $$A:=\big\{(b_i) :\min_{i}\{b_i=0\}\textrm{ is even}\big\}$$ and $$B:=\big\{(b_i) :\min_{i}\{b_i=0\}\textrm{ is odd}\big\}.$$In particular
$$A= \bigcup_{w \in \Sigma^{\ast}_{\textnormal{odd}}} [w0]$$
and
$$B= \bigcup_{w \in \Sigma^{\ast}_{\textnormal{even}}} [w0].$$
Where $\Sigma^{\ast}_{\textnormal{odd}}$ denotes all finite words over the alphabet $\mathbb{N}$ of odd length, $\Sigma^{\ast}_{\textnormal{even}}$ denotes all finite words over the alphabet $\mathbb{N}$ of even length. By the Birkhoff ergodic theorem, $\nu\left(\Sigma_0 \setminus A\cup B\right)=0$. 

Since $T$ is orientation reversing and $T'$ is increasing, it follows that for $(b_i) \in A$, $\Pi(\Pi_1((b_i))) \leq \Pi(\Pi_2((b_i))),$ and for $(b_i) \in B$, $\Pi(\Pi_2((b_i)))\leq \Pi(\Pi_1((b_i)))$. Now,
\begin{eqnarray*}
\chi(\mu_{\p})&=& \int_{A} \log|T^{\prime}\circ \Pi \circ \Pi_2|d\nu+\int_{B} \log|T^{\prime}\circ \Pi \circ \Pi_2|d\nu
\end{eqnarray*}
and similarly
\begin{eqnarray*}
\chi(\mu_{\pen})&=& \int_{A} \log|T^{\prime}\circ \Pi \circ \Pi_1|d\nu+\int_{B} \log|T^{\prime}\circ \Pi \circ \Pi_1|d\nu
\end{eqnarray*}
Thus, 
\begin{multline}
\label{separate}
\chi(\mu_{\p})- \chi({\mu_{\pen}})= \int_{B}\log|T^{\prime}\circ \Pi \circ \Pi_2|- \log|T^{\prime}\circ \Pi \circ \Pi_1|d\nu - \\
 \int_{A}\log|T^{\prime}\circ \Pi \circ \Pi_1|- \log|T^{\prime}\circ \Pi \circ \Pi_2|d\nu 
 \end{multline}
Fix $w \in \Sigma^{\ast}_{\textnormal{even}}$ (where $w$ can be the `empty' word). We begin by showing that
\begin{equation}
\label{begin}
\int_{[w0]}\log|T^{\prime}\circ \Pi \circ \Pi_2|- \log|T^{\prime}\circ \Pi \circ \Pi_1|d\nu \geq \int_{\bigcup_{k \in \mathbb{N}} [kw0]}\log|T^{\prime}\circ \Pi \circ \Pi_1|- \log|T^{\prime}\circ \Pi \circ \Pi_2|d\nu 
\end{equation}
Since $\nu$ is $\sigma_0$ invariant, we can rewrite the first integral above as
\begin{eqnarray}
\label{rewrite}
& &\int_{[w0]}\log|T^{\prime}\circ \Pi \circ \Pi_2|- \log|T^{\prime}\circ \Pi \circ \Pi_1|d\nu\circ \sigma_0^{-1} \nonumber\\
 & &= \int_{\bigcup_{k \in \mathbb{N}_0} [kw0]}\log|T^{\prime}\circ \Pi \circ \Pi_2 \circ \sigma_0|- \log|T^{\prime}\circ \Pi \circ \Pi_1\circ \sigma_0|d\nu \nonumber\\
& &= \int_{\bigcup_{k \in \mathbb{N}_0} [kw0]}\log|T^{\prime}\circ T\circ \Pi \circ \Pi_2|- \log|T^{\prime}\circ T\circ \Pi \circ \Pi_1|d\nu.
\end{eqnarray}
Here $\mathbb{N}_0= \mathbb{N}\cup\{0\}.$ The final line follows because $\Pi_1\circ \sigma_0=\sigma\circ \Pi_1$, $\Pi_2\circ \sigma_0=\sigma\circ \Pi_2$ and $\Pi \circ \sigma=T \circ \Pi$. For all $(b_i) \in [0w0]$, $T(\Pi \circ \Pi_2((b_i)))\leq T(\Pi\circ\Pi_1((b_i))),$ therefore since $|T'|$ is decreasing,
\begin{equation}
\label{zero inequality}
\int_{[0w0]}\log|T^{\prime}\circ T\circ \Pi \circ \Pi_2|- \log|T^{\prime}\circ T\circ \Pi \circ \Pi_1|d\nu \geq 0 .
\end{equation}
So to prove \eqref{begin} it is enough to prove
\begin{eqnarray} \label{int2}
\int_{\bigcup_{k \in \mathbb{N}} [kw0]}\log|T^{\prime}\circ T\circ \Pi \circ \Pi_2|- \log|T^{\prime}\circ T\circ \Pi \circ \Pi_1|d\nu \geq \nonumber \\
\int_{\bigcup_{k \in \mathbb{N}} [kw0]}\log|T^{\prime}\circ \Pi \circ \Pi_1|- \log|T^{\prime}\circ \Pi \circ \Pi_2|d\nu 
\end{eqnarray}
If we let $(b_i) \in [kw0]$ and put $x= \Pi\circ \Pi_2((b_i))$, $y=\Pi\circ\Pi_1((b_i))$ we see that $x>y$. We know by property $(2)$ that $(T^2)'(x) \geq (T^2)'(y)$ for any $x>y$ for which $x,y \in \mathcal{I}_k$, therefore by an application of the chain rule we have
$$\log \left|\frac{T^{\prime}\circ T(\Pi \circ \Pi_2((b_i)))}{T^{\prime}\circ T(\Pi \circ \Pi_1((b_i)))}\right| \geq  \log \left| \frac{T^{\prime}(\Pi \circ \Pi_1((b_i)))}{T^{\prime}(\Pi \circ \Pi_2((b_i)))} \right|$$
from which we can deduce \eqref{int2}. Applying the above equations we obtain the following
\begin{eqnarray}
\label{bigarray}
\chi(\mu_{\p})-\chi(\mu_{\pen}) &\stackrel{\eqref{separate}}=& \sum_{w\in \Sigma_{\textnormal{even}}^*}\Big(\int_{[w0]}\log|T^{\prime}\circ \Pi \circ \Pi_2| -\log|T^{\prime}\circ \Pi \circ \Pi_1|d\nu \nonumber\\ &-&\int_{\bigcup_{k \in \mathbb{N}}[kw0]}\log|T^{\prime}\circ \Pi \circ \Pi_1|- \log|T^{\prime}\circ \Pi \circ \Pi_2|d\nu\Big)\,  \nonumber \\
&\stackrel{\eqref{begin}}\geq &\int_{[0]}\log|T^{\prime} \circ \Pi \circ \Pi_2|- \log|T^{\prime}\circ \Pi \circ \Pi_1|d\nu\nonumber\\ &-&\int_{\bigcup_{k \in \mathbb{N}}[k0]}\log|T^{\prime}\circ \Pi \circ \Pi_1|- \log|T^{\prime}\circ \Pi \circ \Pi_2|d\nu\,  \nonumber \\
&\stackrel{\eqref{rewrite}}= &\int_{\bigcup_{k \in \mathbb{N}_0}[k0]}\log|T^{\prime}\circ T\circ  \Pi \circ \Pi_2|- \log|T^{\prime}\circ T \circ \Pi \circ \Pi_1|d\nu\nonumber\\ &-&\int_{\bigcup_{k \in \mathbb{N}}[k0]}\log|T^{\prime}\circ \Pi \circ \Pi_1|- \log|T^{\prime}\circ \Pi \circ \Pi_2|d\nu\, \nonumber\\
&= &\int_{[00]}\log|T^{\prime}\circ T\circ  \Pi \circ \Pi_2|- \log|T^{\prime}\circ T \circ \Pi \circ \Pi_1|d\nu\nonumber\\
&+ &\int_{\bigcup_{k \in \mathbb{N}}[k0]}\log|T^{\prime}\circ T\circ  \Pi \circ \Pi_2|- \log|T^{\prime}\circ T \circ \Pi \circ \Pi_1|d\nu\nonumber\\ &-&\int_{\bigcup_{k \in \mathbb{N}}[k0]}\log|T^{\prime}\circ \Pi \circ \Pi_1|- \log|T^{\prime}\circ \Pi \circ \Pi_2|d\nu\,\nonumber\\
&= &\int_{[00]}\log\Big|\frac{T^{\prime}\circ T\circ  \Pi \circ \Pi_2}{T^{\prime}\circ T \circ \Pi \circ \Pi_1}\Big|\\
&+& \int_{\bigcup_{k \in \mathbb{N}} [k0]}\log\left|\frac{T^{\prime}\circ T\circ \Pi \circ \Pi_2 \cdot T^{\prime}\circ \Pi \circ \Pi_2}{T^{\prime}\circ T\circ \Pi \circ \Pi_1\cdot T^{\prime}\circ \Pi \circ \Pi_1}\right|d\nu(\i)d\nu\nonumber .
\end{eqnarray}
If $(b_i)\in [k0]$ for some $k\in\mathbb{N}_0,$ then $T\circ \Pi \circ \Pi_1((b_i))\in I_1,$ and $T\circ \Pi \circ \Pi_2((b_i))\in I_n.$ Repeating the argument given in Case $1,$ we can assert that there exists $D>0$ such that 
\begin{equation}
\label{derivativefrac1}
\frac{T^{\prime}\circ T\circ \Pi \circ \Pi_2((b_i))}{T^{\prime}\circ T\circ \Pi \circ \Pi_1((b_i))}\geq D\tau_n.
\end{equation}
If $(b_i)\in [k0]$ for some $k\in\mathbb{N}$ then $\Pi \circ \Pi_1((b_i)),\Pi \circ \Pi_2((b_i))\in I_k.$ In which case it follows from Lemma \ref{bounded distortion} that there exists $D'>0$ such that for all $(b_i)\in[k0],$ we have  
\begin{equation}
\label{derivativefrac2}
\frac{T^{\prime}\circ \Pi \circ \Pi_2((b_i))}{T^{\prime}\circ \Pi \circ \Pi_1((b_i))}\geq D'.
\end{equation}
Without loss of generality we may assume that $D'<1$. In which case substituting \eqref{derivativefrac1} and \eqref{derivativefrac2} into \eqref{bigarray} we obtain
\begin{align*}
\chi(\mu_{\p})-\chi(\mu_{\pen})&\geq \int_{[00]}\log DD'\tau_n d\nu(i)+ \int_{\bigcup_{k \in \mathbb{N}} [k0]}\log DD'\tau_n d\nu(i)\\
&= \Big(\nu([00])+\nu(\cup_{k\in\mathbb{N}}[k0]))\Big)\log DD'\tau_n\\
&=\epsilon \log DD'\tau_n.
\end{align*}Which completes our proof.

\end{proof}

\subsection{Maximising measures supported on finitely many symbols}
Fix a $T$ satisfying properties $(1)-(5)$ and $L\in\mathbb{N}.$ One can consider the sequence space $\Sigma_L:=\{1,\ldots,L\}^{\mathbb{N}}$ and the projection map $\Pi:\Sigma_L\to [0,1]$ given by restricting $\Pi$ to $\Sigma_L.$ By a small abuse of notation we will also denote this restricted map by $\Pi$. Given a probability vector $\p=(p_i)_{i=1}^{L},$ we can consider the corresponding Bernoulli measure $m_{\p}$ supported on $\Sigma_{L}$, and its associated pushforward $\mu_\p$. Just as in the case of infinitely many digits, one can ask whether there exists a Bernoulli measure whose pushforward has maximal dimension. Importantly when we restrict to finitely many digits, the set of Bernoulli measures $m_\p$ is now compact with respect to the weak star topology, and the map $m_\p \to \dim \mu_{\p}$ is continuous by Lemma \ref{volume lemma}. Therefore there must exist $\p^L$ such that $$\dim \mu_{\p^L}= \sup_{\p=(p_i)_{i=1}^L}\dim \mu_{\p}.$$ Note that we can apply Lemma \ref{volume lemma} because any measure $\mu_\p$ supported on at most $L$ digits always satisfies $h(\mu_\p)\leq \log L$. To construct the maximising measure $\mu_{\p^*}$ whose existence is asserted by Theorem \ref{Main theorem}, we will make use of the sequence of measures $(\mu_{\p^L})$. 

We now introduce a useful class of measures that exhibit a certain decay property. Let $C>0$ and $\alpha\in(s_0,1),$ we say that a probability vector $\p$ exhibits $(C,\alpha)$ decay if $$p_i\leq \frac{C}{\tau_i^{\alpha}} \textrm{ for all }i\in\mathbb{N}.$$ Recall here that $\tau_n:=\inf_{x\in I_n}|T'(x)|.$ We let $$D(C,\alpha):=\{\mu_\p:\p \textrm{ exhibits }(C,\alpha) \textrm{ decay}\}.$$
In the following proposition we show that when $h(\mu_\p)<\infty$, the quantity $\dim_{H}(\mu_\p)$ can be approximated arbitrarily well by the dimension of a measure supported on finitely many symbols. We also prove that $\dim(\cdot)$ is continuous as a real valued function on $D(C,\alpha)$ with respect to the weak star topology. 

\begin{proposition}
	\label{long prop}Assume $T$ satisfies properties $(1)-(5)$. The following properties hold:
	\begin{enumerate}
		\item Suppose $\mu_{\p}$ satisfies $h(\mu_\p)<\infty$. Then for any $\epsilon>0$ there exists $L\in\mathbb{N}$ and a measure $\mu_{\p,L}$ supported on $L$ symbols such that $$|\dim  \mu_{\p}- \dim  \mu_{\p,L}|<\epsilon.$$ 
		\item $\dim:D(C,\alpha)\to\mathbb{R}$ is continuous.
	\end{enumerate}
\end{proposition}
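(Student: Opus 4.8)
The plan is to deduce both statements from Proposition \ref{volume lemma}, which expresses $\dim\mu_\p=h(\mu_\p)/\chi(\mu_\p)$, by separately controlling the entropy and the Lyapunov exponent. The basic tool throughout is Lemma \ref{bounded distortion}: taking $n=1$ and combining it with the mean value theorem (and $\int_{I_n}|T'|\,dx=1$, so that $|T'|$ attains the value $|I_n|^{-1}$ somewhere on $I_n$) yields a uniform constant $C$ with
\begin{equation*}
\log|I_n|^{-1}-C\le \log|T'(x)|\le \log|I_n|^{-1}+C\qquad\text{for all }x\in I_n.
\end{equation*}
Summing $\int_{I_n}\log|T'|\,d\mu_\p$ over $n$ and using $\mu_\p(I_n)=p_n$ gives $\bigl|\chi(\mu_\p)-\sum_n p_n\log|I_n|^{-1}\bigr|\le C$; in particular $\chi(\mu_\p)<\infty$ if and only if $\sum_n p_n\log|I_n|^{-1}<\infty$, and the tail $\sum_{n>N}\int_{I_n}\log|T'|\,d\mu_\p$ is dominated by $\sum_{n>N}p_n(\log|I_n|^{-1}+C)$.

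For part (1), set $S_L=\sum_{i\le L}p_i$ and let $\mu_{\p,L}$ be the pushforward of the Bernoulli measure of $\p_L=S_L^{-1}(p_1,\dots,p_L,0,0,\dots)$. Since $S_L\to1$ and $h(\mu_\p)=-\sum p_i\log p_i<\infty$, the identity $h(\mu_{\p,L})=-S_L^{-1}\sum_{i\le L}p_i\log p_i+\log S_L$ forces $h(\mu_{\p,L})\to h(\mu_\p)$. As $\p_L\to\p$ coordinatewise, $\mu_{\p,L}\to\mu_\p$ weak-$\ast$. When $\chi(\mu_\p)<\infty$ I would split $\chi(\mu_{\p,L})$ as $\int_{\cup_{n\le N}I_n}\log|T'|\,d\mu_{\p,L}+\int_{\cup_{n>N}I_n}\log|T'|\,d\mu_{\p,L}$: the tail is $\le S_L^{-1}\sum_{n>N}p_n(\log|I_n|^{-1}+C)$, which is small uniformly in $L$ once $N$ is large (the full series converges and $S_L\to1$), while the leading term converges by weak-$\ast$ convergence, as $\log|T'|\,\mathbf{1}_{\cup_{n\le N}I_n}$ is bounded with discontinuities confined to the $\mu_\p$-null set of interval endpoints. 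Hence $\chi(\mu_{\p,L})\to\chi(\mu_\p)$ and $\dim\mu_{\p,L}\to\dim\mu_\p$. In the remaining case $\chi(\mu_\p)=\infty$ we have $\dim\mu_\p=0$, and the lower bound $\chi(\mu_{\p,L})\ge S_L^{-1}\sum_{n\le L}p_n(\log|I_n|^{-1}-C)\to\infty$ forces $\dim\mu_{\p,L}\to0$ as well.

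For part (2), the $(C,\alpha)$ decay is exactly what renders the tails uniformly small. First, if $\mu_{\p^{(k)}}\to\mu_\p$ weak-$\ast$ inside $D(C,\alpha)$ then, evaluating on each $I_n$ (whose boundary is $\mu_\p$-null), $p_n^{(k)}=\mu_{\p^{(k)}}(I_n)\to\mu_\p(I_n)=p_n$. Since $\tau_n\asymp|I_n|^{-1}\to\infty$ and $\alpha>s_0$, fixing $\alpha'\in(s_0,\alpha)$ gives $\tau_n^{-\alpha}\log\tau_n\lesssim\tau_n^{-\alpha'}\asymp|I_n|^{\alpha'}$, which is summable by \eqref{s0}. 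Using $p_n\le C\tau_n^{-\alpha}$ and that $-t\log t$ is increasing near $0$, the terms of both series $-\sum_n p_n\log p_n$ and $\sum_n p_n\log|I_n|^{-1}$ are then dominated, uniformly over $D(C,\alpha)$, by a single summable sequence. Repeating the tail/main splitting of part (1), the contributions beyond $N$ to $h$ and $\chi$ are small uniformly over $D(C,\alpha)$, while the finitely many leading terms converge under $p_n^{(k)}\to p_n$. This makes $h$ and $\chi$ continuous on $D(C,\alpha)$; as both are finite there and $\chi(\mu_\p)\ge l^{-1}\log\Lambda>0$ by property $(3)$, the ratio $\dim=h/\chi$ is continuous.

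The main obstacle in both parts is the Lyapunov exponent, not the entropy: $\chi$ is not a linear functional of $\p$, since $\log|T'\circ\Pi|$ depends on the entire sequence rather than only its first symbol. The device that resolves this is Lemma \ref{bounded distortion}, which replaces the true integrand on $I_n$ by $\log|I_n|^{-1}$ up to a uniform additive error, reducing $\chi$ to the nearly linear quantity $\sum_n p_n\log|I_n|^{-1}$ plus a bounded term. The delicate point is then to obtain tail control that is \emph{uniform} over the relevant family of measures, and this is precisely where the finiteness of $h$ (in part (1)) and the decay exponent $\alpha>s_0$ together with \eqref{s0} (in part (2)) are used.
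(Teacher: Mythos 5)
Your argument is correct, but it reaches the conclusion by a genuinely different route than the paper. The paper never establishes continuity of $h$ and $\chi$ separately: it works directly with the ratio, writing $\dim \mu_\p$ via the $n$-th level expression \eqref{nfold} and using the uniformly expanding property $(3)$ to make the bounded distortion constant of Lemma \ref{bounded distortion} negligible \emph{relative} to the growing denominator $\int \log|(T^n)'|\,d\mu_\p$. This yields \eqref{1}, a finite-block approximation of the dimension itself, uniform over all $\p'$ with finite entropy and Lyapunov exponent; part $(1)$ then follows by truncating and renormalising (the paper's $m_{\p,L}$ is exactly your measure) with the error estimates \eqref{2}, \eqref{A}, \eqref{3} and a triangle inequality, and part $(2)$ by upgrading \eqref{1} to the uniform bound \eqref{Lbound} on $D(C,\alpha)$ and letting finitely many cylinder masses converge. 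You instead stay at level one, comparing $\log|T'|$ on $I_n$ with $\log|I_n|^{-1}$ up to a fixed additive constant, and prove convergence/continuity of numerator and denominator separately by a head/tail splitting; the fixed distortion constant is harmless in your scheme because it only ever multiplies tail masses, and you pass to the ratio using $\chi \geq l^{-1}\log\Lambda>0$ from property $(3)$ --- a lower bound the paper never needs, since its errors are already relative ones. Your route is more elementary (no iterate trick), supplies the tail-domination details on $D(C,\alpha)$ that the paper merely asserts, and handles the $\chi(\mu_\p)=\infty$ case explicitly; what the paper's route buys is the uniform finite-digit approximation \eqref{Lbound} of $\dim$ itself, the form advertised in the introduction. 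Two points to patch, neither fatal: (i) your portmanteau step and the identification $p_n=\mu_\p(I_n)$ require $\mu_\p(\partial I_n)=0$, which fails precisely when some $p_i=1$ and the fixed point of $\phi_i$ is an endpoint (for the Luroth map $\mu_\p=\delta_0$); that case is trivial ($h=0$, and $\mu_{\p,L}=\mu_\p$ for large $L$) but should be split off, or one should read weak star convergence on the symbolic space, where cylinders are clopen and the issue disappears, as the paper's proof implicitly does; (ii) in part $(2)$ the claim that the leading terms converge ``under $p_n^{(k)}\to p_n$'' is accurate for $h$ but not for $\chi$, since $\int_{I_n}\log|T'|\,d\mu_{\p^{(k)}}$ depends on more than the mass $p_n^{(k)}$; the bounded, $\mu_\p$-a.e.\ continuous integrand argument you already used in part $(1)$ is what is needed there, so the fix is one of wording only.
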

\begin{proof}
We start by proving $(1)$. Without loss of generality we may assume that $\dim \mu_{\p}$ is strictly positive, and so by Lemma \ref{volume lemma} we must also have $\chi(\mu_\p)<\infty$. Otherwise we may simply take $\mu_{\p,L}$ to be a measure supported on a single point and our proof is complete.   

It follows from the fact that $\mu_\p$ is $T$-invariant, the chain rule, and Lemma \ref{volume lemma}, that for any $n\in\mathbb{N}$ we have 
 \begin{equation}
 \label{nfold}\dim \mu_{\p}= \frac{-\sum_{\mathbf{j}\in \mathbb{N}^n} m_\p ([\mathbf{j}])\log m_\p([\mathbf{j}])}{\int \log |(T^n)'|\, d\mu_\p}.
 \end{equation} Let us now fix $\epsilon>0$. It follows from our uniformly expanding assumption that $\inf_{x\in[0,1]} |(T^n)'(x)|$ becomes arbitrarily large as $n\to\infty$. Combining this observation with Lemma \ref{bounded distortion} and \eqref{nfold}, it follows that we can pick $n$ sufficiently large that for any $\p'$ satisfying $h(\mu_{\p'})<\infty$ and $\chi(\mu_\p')<\infty$ we have 
 \begin{equation}
 \label{1} 
 \Big|\dim \mu_{\p'}-\frac{-\sum_{\mathbf{j}\in \mathbb{N}^n} m_{\p'} ([\mathbf{j}])\log m_{\p'}([\mathbf{j}])}{\sum_{\mathbf{j}\in \mathbb{N}^n} m_{\p'}([\mathbf{j}])\log |(T^n)'(x_\mathbf{j})|}\Big|<\frac{\epsilon}{4}.
 \end{equation}
 Where $x_\mathbf{j}$ is an arbitrary element of $I_\mathbf{j}$. Let us now pick $L\in\mathbb{N}$ sufficiently large that 
 \begin{equation}
 \label{2}\Big|\frac{-\sum_{\mathbf{j}\in \{1,\ldots,L\}^n} m_\p ([\mathbf{j}])\log m_\p([\mathbf{j}])}{\sum_{\mathbf{j}\in \{1,\ldots,L\}^n} m_{\p}([\mathbf{j}])\log |(T^n)'(x_\mathbf{j})|}- \frac{-\sum_{\mathbf{j}\in \mathbb{N}^n} m_\p ([\mathbf{j}])\log m_\p([\mathbf{j}])}{\sum_{\mathbf{j}\in \mathbb{N}^n} m_{\p}([\mathbf{j}])\log |(T^n)'(x_\mathbf{j})|}\Big|<\frac{\epsilon}{4}
 \end{equation}and 
 \begin{equation}
 \label{A}
 \Big|\log \Big(\sum_{i=1}^L m_\p ([i])\Big)^n\Big|<\frac{\epsilon}{4}.
 \end{equation} Let $m_{\p,L}$ be the Bernoulli measure defined via the equation 
\[ m_{\p,L}([j]): = \left\{ \begin{array}{ll}
\frac{m_{\p}([j])}{\sum_{i=1}^L m_\p ([i])} & \mbox{if $1\leq j \leq L$};\\
0 & \mbox{if $j>L$}.\end{array} \right. \] 
We observe 
\begin{align*}
&\frac{-\sum_{\mathbf{j}\in \{1,\ldots,L\}^n} m_\p ([\mathbf{j}])\log m_\p([\mathbf{j}])}{\sum_{\mathbf{j}\in \{1,\ldots,L\}^n} m_{\p}([\mathbf{j}])\log |(T^n)'(x_\mathbf{j})|}\\
=&\frac{-\sum_{\mathbf{j}\in \{1,\ldots,L\}^n} m_{\p,L} ([\mathbf{j}])\log m_{\p}([\mathbf{j}])}{\sum_{\mathbf{j}\in \{1,\ldots,L\}^n} m_{\p,L}([\mathbf{j}])\log |(T^n)'(x_\mathbf{j})|}\\
=&\frac{-\sum_{\mathbf{j}\in \{1,\ldots,L\}^n} m_{\p,L} ([\mathbf{j}])\log m_{\p}([\mathbf{j}])+\sum_{\mathbf{j}\in \{1,\ldots,L\}^n} m_{\p,L} ([\mathbf{j}])\log m_{\p,L}([\mathbf{j}])}{\sum_{\mathbf{j}\in \{1,\ldots,L\}^n} m_{\p,L}([\mathbf{j}])\log |(T^n)'(x_\mathbf{j})|}\\
-&\frac{\sum_{\mathbf{j}\in \{1,\ldots,L\}^n} m_{\p,L} ([\mathbf{j}])\log m_{\p,L}([\mathbf{j}])}{\sum_{\mathbf{j}\in \{1,\ldots,L\}^n} m_{\p,L}([\mathbf{j}])\log |(T^n)'(x_\mathbf{j})|}\\
=& -\frac{\log \Big(\sum_{i=1}^L m_\p ([i])\Big)^n}{\sum_{\mathbf{j}\in \{1,\ldots,L\}^n} m_{\p,L}([\mathbf{j}])\log |(T^n)'(x_\mathbf{j})|}-\frac{\sum_{\mathbf{j}\in \{1,\ldots,L\}^n} m_{\p,L} ([\mathbf{j}])\log m_{\p,L}([\mathbf{j}]) }{\sum_{\mathbf{j}\in \{1,\ldots,L\}^n} m_{\p,L}([\mathbf{j}])\log |(T^n)'(x_\mathbf{j})|}. 
\end{align*}
Applying \eqref{A} we have  
\begin{equation}
\label{3}
\Big|\frac{-\sum_{\mathbf{j}\in \{1,\ldots,L\}^n} m_\p ([\mathbf{j}])\log m_\p([\mathbf{j}])}{\sum_{\mathbf{j}\in \{1,\ldots,L\}^n} m_{\p}([\mathbf{j}])\log |(T^n)'(x_\mathbf{j})|}- \frac{-\sum_{\mathbf{j}\in \{1,\ldots,L\}^n} m_{\p,L} ([\mathbf{j}])\log m_{\p,L}([\mathbf{j}])}{\sum_{\mathbf{j}\in \{1,\ldots,L\}^n} m_{\p,L}([\mathbf{j}])\log |(T^n)'(x_\mathbf{j})|}\Big|< \frac{\epsilon}{4}.
\end{equation}
Now by repeated applications of the triangle inequality we obtain
\begin{align}
\label{bounding}
&|\dim(\mu_{\p})-\dim(\mu_{\p,L})|\nonumber \\
\leq &\Big|\dim(\mu_{\p})-\frac{-\sum_{\mathbf{j}\in \mathbb{N}^n} m_\p ([\mathbf{j}])\log m_\p([\mathbf{j}])}{\sum_{\mathbf{j}\in \mathbb{N}^n} m_{\p}([\mathbf{j}])\log |(T^n)'(x_\mathbf{j})|}\Big|\\
+ &\Big|\frac{-\sum_{\mathbf{j}\in \mathbb{N}^n} m_\p ([\mathbf{j}])\log m_\p([\mathbf{j}])}{\sum_{\mathbf{j}\in \mathbb{N}^n} m_{\p}([\mathbf{j}])\log |(T^n)'(x_\mathbf{j})|}-\frac{-\sum_{\mathbf{j}\in \{1,\ldots,L\}^n} m_\p ([\mathbf{j}])\log m_\p([\mathbf{j}])}{\sum_{\mathbf{j}\in \{1,\ldots,L\}^n} m_{\p}([\mathbf{j}])\log |(T^n)'(x_\mathbf{j})|}\Big|\nonumber\\
+&\Big|\frac{-\sum_{\mathbf{j}\in \{1,\ldots,L\}^n} m_\p ([\mathbf{j}])\log m_\p([\mathbf{j}])}{\sum_{\mathbf{j}\in \{1,\ldots,L\}^n} m_{\p}([\mathbf{j}])\log |(T^n)'(x_\mathbf{j})|}-\frac{-\sum_{\mathbf{j}\in \{1,\ldots,L\}^n} m_{\p,L} ([\mathbf{j}])\log m_{\p,L}([\mathbf{j}])}{\sum_{\mathbf{j}\in \{1,\ldots,L\}^n} m_{\p,L}([\mathbf{j}])\log |(T^n)'(x_\mathbf{j})|}\Big|\nonumber\\
+&\Big|\frac{-\sum_{\mathbf{j}\in \{1,\ldots,L\}^n} m_{\p,L} ([\mathbf{j}])\log m_{\p,L}(\mathbf{j})}{\sum_{\mathbf{j}\in \{1,\ldots,L\}^n} m_{\p,L}([\mathbf{j}])\log |(T^n)'(x_\mathbf{j})|}-\dim(\mu_{\p,L})\Big|\nonumber\\
< &4\cdot \frac{\epsilon}{4}=\epsilon\nonumber.
\end{align}Where we bounded the terms in \eqref{bounding} in order by, \eqref{1}, \eqref{2}, \eqref{3}, and \eqref{1} respectively. This completes our proof of item $(1)$.

We now prove item $(2)$. Let us fix $C>0$ and $\alpha\in(s_0,1)$.  It follows from the definition of $\p$ exhibiting $(C,\alpha)$ decay that for any $n\in\mathbb{N}$ and $\epsilon>0$, we can pick $L\in\mathbb{N}$ depending only on $n,C,$ and $\alpha$, such that:
\begin{enumerate}
	\item For all $\mu_\p\in D(C,\alpha)$ $$\Big|\sum_{\mathbf{j}\in\mathbb{N}^n}m_\p([\mathbf{j}])\log m_\p([\mathbf{j}])-\sum_{\mathbf{j}\in\{1,\ldots,L\}^n}m_\p([\mathbf{j}])\log m_\p([\mathbf{j}])\Big|<\epsilon$$
	\item For all $\mu_\p\in D(C,\alpha)$ $$\Big|\sum_{\mathbf{j}\in\mathbb{N}^n}m_\p([\mathbf{j}])\log|(T^n)'(x_\mathbf{j})|-\sum_{\mathbf{j}\in\{1,\ldots,L\}^n}m_\p([\mathbf{j}]) \log|(T^n)'(x_\mathbf{j})|\Big|<\epsilon.$$
\noindent Similarly, it follows from the definition of $\p$ exhibiting $(C,\alpha)$ decay that for any $n\in\mathbb{N}$, one can pick $K,K'>0$ depending only on $n,C,$ and $\alpha$, such that: 
\\
\item For all $\mu_\p\in D(C,\alpha)$ $$-\sum_{\mathbf{j}\in\mathbb{N}^n}m_\p([\mathbf{j}])\log m_\p([\mathbf{j}])<K$$ 

	\item For all $\mu_\p\in D(C,\alpha)$ $$\sum_{\mathbf{j}\in\mathbb{N}^n}m_\p([\mathbf{j}]) \log|(T^n)'(x_\mathbf{j})|<K'.$$ 
	\end{enumerate}

Applying the above properties, we can assert that for any $\epsilon>0,$ there exists $L\in\mathbb{N}$ depending only on $n,C$ and $\alpha$, such that for any $\mu_\p\in D(C,\alpha)$ we have
\begin{equation*}
\Big|\frac{-\sum_{\mathbf{j}\in \{1,\ldots,L\}^n} m_\p ([\mathbf{j}])\log m_\p([\mathbf{j}])}{\sum_{\mathbf{j}\in \{1,\ldots,L\}^n} m_{\p}([\mathbf{j}])\log |(T^n)'(x_\mathbf{j})|}- \frac{-\sum_{\mathbf{j}\in \mathbb{N}^n} m_\p ([\mathbf{j}])\log m_\p([\mathbf{j}])}{\sum_{\mathbf{j}\in \mathbb{N}^n} m_{\p}([\mathbf{j}])\log |(T^n)'(x_\mathbf{j})|}\Big|<\epsilon.
\end{equation*} Therefore, by \eqref{1} we can assert that for any $\epsilon>0$, there exists $n\in\mathbb{N},$ and $L\in\mathbb{N}$ depending only on $n,C,$ and $\alpha,$ such that for any $\mu_\p\in D(C,\alpha)$ we have
\begin{equation}
\label{Lbound}
\Big|\frac{-\sum_{\mathbf{j}\in \{1,\ldots,L\}^n} m_\p ([\mathbf{j}])\log m_\p([\mathbf{j}])}{\sum_{\mathbf{j}\in \{1,\ldots,L\}^n} m_{\p}([\mathbf{j}])\log |(T^n)'(x_\mathbf{j})|}- \dim \mu_\p \Big|<\epsilon.
\end{equation} 
Now let us a fix a measure $\mu_\p\in D(C,\alpha)$ and assume $(\mu_{\p_k})$ is a sequence in $D(C,\alpha)$ such that $\mu_{\p_k}\to \mu_{\p}$ with respect to the weak star topology. Then for any $\epsilon>0$ we have
\begin{align*}
&\lim_{k\to\infty}|\dim(\mu_{\p_k})- \dim(\mu_{\p})|\\
&\leq \lim_{k\to\infty}\Big|\dim(\mu_{\p_k})-\frac{-\sum_{\mathbf{j}\in \{1,\ldots,L\}^n} m_{\p_k} ([\mathbf{j}])\log m_{\p_k}([\mathbf{j}])}{\sum_{\mathbf{j}\in \{1,\ldots,L\}^n} m_{\p_k}([\mathbf{j}])\log |(T^n)'(x_\mathbf{j})|}\Big|\\
&+\Big|\frac{-\sum_{\mathbf{j}\in \{1,\ldots,L\}^n} m_{\p_k} ([\mathbf{j}])\log m_{\p_k}([\mathbf{j}])}{\sum_{\mathbf{j}\in \{1,\ldots,L\}^n} m_{\p_k}([\mathbf{j}])\log |(T^n)'(x_\mathbf{j})|}-\frac{-\sum_{\mathbf{j}\in \{1,\ldots,L\}^n} m_\p ([\mathbf{j}])\log m_\p([\mathbf{j}])}{\sum_{\mathbf{j}\in \{1,\ldots,L\}^n} m_{\p}([\mathbf{j}])\log |(T^n)'(x_\mathbf{j})|}\Big|\\
&+\Big|\frac{-\sum_{\mathbf{j}\in \{1,\ldots,L\}^n} m_\p ([\mathbf{j}])\log m_\p([\mathbf{j}])}{\sum_{\mathbf{j}\in \{1,\ldots,L\}^n} m_{\p}([\mathbf{j}])\log |(T^n)'(x_\mathbf{j})|}- \dim(\mu_{\p})\Big|\\
&<2\epsilon.
\end{align*} Here $n\in\mathbb{N}$ and $L\in\mathbb{N}$ were chosen so \eqref{Lbound} holds. We used weak star convergence to conclude that the second term converges to zero. Since $\epsilon$ is arbitrary we have $\lim_{k\to\infty}\dim(\mu_{\p_k})= \dim(\mu_{\p})$ as required.
\end{proof}

The following proposition gives conditions guaranteeing that our maximising measures on $L$ symbols, the $\mu_{\p^L},$ are contained in $D(C,\alpha)$ for appropriate choices of $C>0$ and $\alpha$. 

\begin{proposition}
	\label{decay prop}
Suppose there exists $\mu_{\p}$ such that $\dim \mu_{\p}>s_0.$ Then there exists $C>0$ and $\alpha\in(s_0,1)$ such that $\mu_{\p^L}\in D(C,\alpha)$ for all $L\in\mathbb{N}$. 
\end{proposition}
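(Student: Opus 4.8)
The plan is to exploit the first-order optimality of each maximising vector $\p^L$ together with the two variational estimates of Lemmas \ref{entropy} and \ref{lyapunov}, which measure how the entropy and the Lyapunov exponent respond when a small amount of mass is shifted from the $n$-th coordinate onto the first. At the maximiser $\p^L$ no such shift can increase the dimension, and since moving mass onto the first branch lowers the Lyapunov exponent by an amount proportional to $\log\tau_n$ (Lemma \ref{lyapunov}), while it costs only $\log(p_1^L/p_n^L)$ in entropy (Lemma \ref{entropy}), the entropy loss must dominate. This is exactly what forces $p_n^L$ to decay like $\tau_n^{-s}$.

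Before carrying this out I would first secure a uniform lower bound on $\dim\mu_{\p^L}$. The hypothesis provides some $\mu_{\p}$ with $\dim\mu_{\p}>s_0$; by Proposition \ref{infinite h} this forces $h(\mu_{\p})<\infty$, so Proposition \ref{long prop}(1) yields an $L_0\in\mathbb{N}$ and a measure supported on $L_0$ symbols whose dimension still exceeds $s_0$. As $\p^{L_0}$ is the maximiser on $L_0$ symbols, $\delta:=\dim\mu_{\p^{L_0}}>s_0$, and since the simplex of probability vectors on $\{1,\dots,L\}$ grows with $L$, the sequence $\dim\mu_{\p^L}$ is nondecreasing; hence $\dim\mu_{\p^L}\geq\delta>s_0>0$ for every $L\geq L_0$.

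Now fix $L\geq L_0$, write $s:=\dim\mu_{\p^L}$, and fix $n\geq 2$ with $p_n^L>0$ (if $p_n^L=0$ the decay bound is trivial). Let $\p^L_{\epsilon,n}$ denote $\p^L$ with $\epsilon$ mass shifted from coordinate $n$ to coordinate $1$. Since $\mu_{\p^L}$ is supported on finitely many branches we have $h(\mu_{\p^L})\leq\log L<\infty$ and, by Lemma \ref{bounded distortion}, $\chi(\mu_{\p^L})<\infty$, so Proposition \ref{volume lemma} gives $h(\mu_{\p^L})=s\,\chi(\mu_{\p^L})$. Maximality gives $\dim\mu_{\p^L_{\epsilon,n}}\leq s$, that is $h(\mu_{\p^L_{\epsilon,n}})\leq s\,\chi(\mu_{\p^L_{\epsilon,n}})$. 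Subtracting the identity above, bounding the entropy difference by Lemma \ref{entropy} as $\int_0^\epsilon\log\frac{p_1^L+t}{p_n^L-t}\,dt$ and the Lyapunov difference from below by $\epsilon\log(\lambda\tau_n)$ via Lemma \ref{lyapunov}, I would divide by $\epsilon$ and let $\epsilon\to0^+$ to obtain
$$\log\frac{p_1^L}{p_n^L}\;\geq\; s\,\log(\lambda\tau_n),$$
whence $p_n^L\leq p_1^L(\lambda\tau_n)^{-s}\leq(\lambda\tau_n)^{-s}$.

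It then remains to convert this into a single $(C,\alpha)$ decay bound valid for all $L$. I would set $\alpha:=\delta\in(s_0,1)$. By condition $(5)$ and Lemma \ref{bounded distortion} one has $|I_n|\to0$ and hence $\tau_n\to\infty$, so there is $N$ with $\tau_n\geq\max\{1,\lambda^{-1}\}$ for all $n\geq N$. For such $n$ and every $L\geq L_0$, since $s\geq\delta=\alpha$ and $\tau_n\geq1$ we get $(\lambda\tau_n)^{-s}=\lambda^{-s}\tau_n^{-s}\leq\lambda^{-1}\tau_n^{-\alpha}$. The remaining cases — the finitely many indices $n<N$, the coordinate $n=1$, and the finitely many vectors $\p^L$ with $L<L_0$ — involve only finitely many pairs $(n,L)$ with $p_n^L>0$, each obeying the trivial bound $p_n^L\leq1$, so enlarging $C$ to dominate $\tau_n^{\alpha}$ on this finite set absorbs them. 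Taking $C:=\max\{\lambda^{-1},\,\max_{1\leq n\leq\max(N,L_0)}\tau_n^{\alpha}\}$ then yields $p_n^L\leq C\tau_n^{-\alpha}$ for all $n$ and $L$, i.e. $\mu_{\p^L}\in D(C,\alpha)$.

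The main obstacle I anticipate is making $\alpha$ and $C$ genuinely independent of $L$: the exponent $s=\dim\mu_{\p^L}$ in the decay bound varies with $L$, so the argument leans on (i) the uniform lower bound $\delta$ coming from monotonicity together with Proposition \ref{long prop}(1), and (ii) the fact that $\tau_n\to\infty$, which lets the $L$-dependence of the exponent be absorbed once $\tau_n\geq1$. Confirming $\tau_n\to\infty$ from the standing hypotheses is a small but essential technical point that I would verify carefully.
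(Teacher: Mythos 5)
Your proposal is correct and follows essentially the same route as the paper's own proof: the paper likewise first secures a uniform lower bound $\dim \mu_{\p^L}>\kappa\alpha>s_0$ via Proposition \ref{long prop}(1) together with the monotonicity of $\dim \mu_{\p^L}$ in $L$, and then rules out $p_n^L>\tau_n^{-\alpha}$ by showing, using Proposition \ref{volume lemma} with Lemmas \ref{entropy} and \ref{lyapunov}, that transferring $\epsilon$ mass from digit $n$ to digit $1$ would strictly increase the dimension of the maximiser. Your deviations are cosmetic: you phrase the mass-transfer step as an infinitesimal stationarity inequality $\log\bigl(p_1^L/p_n^L\bigr)\geq s\log(\lambda\tau_n)$ obtained by letting $\epsilon\to 0^+$, where the paper instead argues by contradiction with a slack factor $\kappa>1$ and works only with $n,L$ large; and you should choose $\alpha\in\bigl(s_0,\min\{\delta,1\}\bigr)$ rather than setting $\alpha:=\delta$, since in the general countable branched setting $\delta=1$ is not excluded (the argument only needs $\alpha\leq s$ and $\alpha\in(s_0,1)$, so this is a one-line fix).
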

\begin{proof}
It suffices to prove the result for $L$ sufficiently large. By item $(1)$ from Proposition \ref{long prop} there must exist $\mu_{\p'}$ supported on finitely many digits such that $\dim \mu_{\p'}>s$ for some $s>s_0$. Since $\dim \mu_{\p^L}$ is increasing with $L,$ we may therefore assume that for all $L\in\mathbb{N}$ sufficiently large we have $\dim \mu_{\p^L}>s.$ 

Let $\kappa>1$ and $\alpha\in(s_0,1)$ be sufficiently small so $\dim \mu_{\p^L}>\kappa\alpha$ for all $L$ sufficiently large. Now let us assume for a contradiction that there does not exist $C>0$ such that $\mu_{\p^L} \in D(C,\alpha)$ for all $L \in \mathbb{N}$. Therefore there must exist $L\in\mathbb{N}$ arbitrarily large, and $n$ arbitrarily large, such that $1\leq n \leq L$ and 
\begin{equation}
\label{faildecay}
p^L_n> \frac{1}{\tau_n^{\alpha}}.
\end{equation}Let us now fix such $L$ and $n$. We consider the measure $\mu_{\p^{L}_{\epsilon,n}}$ obtained when we transfer $\epsilon$ of the mass from the digit $n$ to the digit $1$. Then by Lemma \ref{volume lemma}, Lemma \ref{entropy}, and Lemma \ref{lyapunov}, for sufficiently small $\epsilon>0,$ and $L$ and $n$ sufficiently large, 
\begin{eqnarray}
\dim \mu_{\p^{L}_{\epsilon,n}} &=& \frac{h(\mu_{\p^{L}_{\epsilon,n}})}{\chi(\mu_{\p^{L}_{\epsilon,n}})}\nonumber\\
&\geq&\frac{h(\mu_{\p^{L}})-\epsilon \kappa \log(\frac{p_1}{p_n})}{\chi(\mu_{\p^{L}})-\epsilon\log (\lambda \tau_n)} \nonumber \\
&\stackrel{\eqref{faildecay}}\geq& \frac{h(\mu_{\p^{L}})-\epsilon \kappa \log( \tau_n^{\alpha})}{\chi(\mu_{\p^{L}})-\epsilon\log (\lambda \tau_n)} \nonumber \\
&>& \frac{h(\mu_{\p^{L}})}{\chi(\mu_{\p^{L}})} \label{increase}.
\end{eqnarray}
Where (\ref{increase}) follows because (now abbreviating $h=h(\mu_{\p^{L}})$ and $\chi(\mu_{\p^{L}})=\chi$):
$$	\frac{h-\epsilon\kappa \log( \tau_n^{\alpha})}{\chi-\epsilon \log(\lambda \tau_n)} > \frac{h}{\chi} \Leftrightarrow	\frac{h}{\chi}> \frac{\kappa\alpha \log \tau_n}{\log \lambda +\log \tau_n},$$
and for $L$ and $1\leq n\leq L$ sufficiently large
$$\frac{h}{\chi}= \dim \mu_{\p^{L}} >\frac{\kappa\alpha \log \tau_n}{\log \lambda +\log \tau_n}.$$
In the last inequality we used the fact that for sufficiently large $n$ the right hand side resembles $\kappa\alpha,$ and by definition $\dim \mu_{\p^{L}}>\kappa\alpha$. Therefore $\mu_{\p^{L}_{\epsilon,n}}$ is another measure supported on the first $L$ symbols such that $\dim \mu_{\p^{L}_{\epsilon,n}}> \dim \mu_{\p^{L}}.$ This contradicts the fact $\dim \mu_{\p^L}$ is maximal and our result follows.

\end{proof}

We are now in a position to prove Theorem \ref{Main theorem}.

\begin{proof}[Proof of Theorem \ref{Main theorem}]
By assumption there exists $\mu_{\p}$ such that $\dim \mu_{\p}>s_0$. Applying Proposition \ref{decay prop} we know that the maximising measures $\mu_{\p^L}$ are contained in $D(C,\alpha)$ for some $C>0$ and $\alpha\in(s_0,1)$. Let
\begin{align*}
\p^1&=(1,0,0\ldots,)\\
\p^2&=(p_1^2,p_2^2,0,0\ldots,)\\
&\cdots\\
\p^L&=(p_1^L,p_2^L,\ldots,p_L^L,0,0\ldots).
\end{align*} 
By considering subsequences we can assert that there exists a vector $\p^*=(p_i^*)_{i=1}^{\infty}$, such that $p_i^{L_n}\to p_i^*$ along some subsequence $(L_n)$ for all $i\in\mathbb{N}$. Note that $\p^*$ is a probability vector, and the corresponding measure $\mu_{\p^{*}}$ is contained in $D(C,\alpha)$. This is a consequence of $\mu_{\p^{L}}$ being contained $D(C,\alpha)$ for all $L\in\mathbb{N}$. 

By the above $\mu_{\p^{L_n}}\to \mu_{\p^*}$ in the weak star topology. We claim that  
\begin{equation}
\label{claim}
\sup_{\p} \dim \mu_\p= \dim \mu_{\p^*}.
\end{equation} To see this, suppose that \eqref{claim} is not true and there exists $\p'$ such that $\dim (\mu_{\p'})> \dim \mu_{\p^*}.$ It follows from Proposition \ref{long prop} that there must exist $\mu_\p''$ supported on finitely many symbols such that $\dim \mu_\p''> \dim \mu_{\p^*}.$ Since $\dim \mu_{\p^{L+1}}\geq \dim \mu_{\p^L}$ for all $L\in\mathbb{N},$ we can pick $N\in\mathbb{N}$ sufficiently large such that for all $n\geq N$ we have $\dim \mu_{\p^{L_n}}\geq \dim (\mu_\p'').$ However, by Proposition \ref{long prop} we know that $\lim_{n\to\infty}\dim \mu_{\p^{L_n}}=\dim \mu_{\p^*}$. Which is impossible given  $\dim \mu_{\p^{L_n}}\geq \dim (\mu_\p'')$. So we have our contradiction and \eqref{claim} must hold.
\end{proof}

\section{Proof of Theorem \ref{supremum cf} and final comments}
To use Theorem \ref{Main theorem} to prove Theorem \ref{supremum cf}, we need to demonstrate that there exists a measure $\mu_{\p}$ satisfying $\dim \mu_{\p}>s_0$. We remark that for the Gauss map we have $s_0=1/2$. Consider the probability vector $\p_{1/3}=(1/3,1/3,1/3,0\ldots).$ It is straightforward to show that $h(\mu_{\p_{1/3}})=\log 3$. Since $\p_{1/3}$ is supported on $3$ digits and $\log T'(x)=-2\log x$ we have \begin{align*}
\int \log |T'|d\mu_{\p_{1/3}}=\sum_{1\leq i,j\leq 3} \int_{I_{ij}} \log |T'|d\mu_{\p_{1/3}}&\leq \sum_{1\leq i,j\leq 3}\int_{I_{ij}} \max_{x\in I_{ij}}-2\log x\, d\mu_{\p_{1/3}}\\
&=\frac{2}{9}\sum_{1\leq i,j\leq 3} \max_{x\in I_{ij}}\log \frac{1}{x}.
\end{align*}Importantly this last term lends itself to explicit calculation. Performing the relevant calculations it can be shown that 
$$\int \log |T'|d\mu_{\p_{1/3}}\leq 1.79811\ldots.$$ Consequently, using Lemma \ref{volume lemma} we have $$\dim \mu_{\p_{1/3}}=\frac{h(\mu_{\p_{1/3}})}{\chi(\mu_{\p_{1/3}})}\geq \frac{\log 3}{1.79811}\approx 0.611\ldots.$$ So $\dim \mu_{\p_{1/3}}>1/2$ as required. By Theorem \ref{Main theorem} we may conclude Theorem \ref{supremum cf}. 

\begin{remark}
Theorem \ref{Main theorem} gives conditions guaranteeing the existence of a measure $\mu_{\p}$ whose dimension is maximal amongst the class of pushforwards of Bernoulli measures. This doesn't immediately imply the existence of a dimension gap at $1$, it merely reduces the question to showing that 
\begin{equation}
\label{weak gap}
\dim \mu_{\p}<1 \textrm{ for all } \mu_\p.
\end{equation}
Note that for the Luroth map there exists a Bernoulli measure whose pushforward is the Lebesgue measure restricted to $[0,1]$. If there is a Bernoulli measure $\p$ whose pushforward satisfies $\dim \mu_{\p}=1$, then by a result of Walters \cite{Wal}, we know that it is the unique absolutely continuous $T$-invariant measure. One can verify the Gauss measure is not the pushforward of a Bernoulli measure via explicit calculation. Using the closed formula for the density one can show that $\mu_{G}(I_{12})\neq \mu_{G}(I_{21}).$ Therefore \eqref{weak gap} holds and we have a dimension gap at $1$. For a general $T,$ one might not necessarily have a nice closed form for the density. In this case one can appeal to a cohomological argument. If there exists a $\p$ such that $\dim \mu_{\p}=1,$  then it follows from \cite[Theorem 16]{Wal} that any point satisfying $T^n(x)=x$ must also satisfy
$$-\log |(T^n)'(x)|=\sum_{i=0}^{n-1}\phi((T^i)(x)),$$ where $\phi(x)=\log p_i$ if $p_i>0$ and $x\in I_i$, and $\phi(x)=0$ if $p_i=0$ and $x \in I_i$. We remark that the right hand side of the above does not depend on the order of the $I_i$ visited by $x.$ Consequently, if $y$ is another point such that $T^{n}(y)=y,$ and the orbit of $y$ visits the same intervals as $x,$ then we must have $|(T^n)'(x)|=|(T^n)'(y)|.$ If we have an explicit formula for $T,$ one can hope to verify whether $|(T^n)'(x)|=|(T^n)'(y)|$ for all such $x$ and $y$. If we can find such an $x$ and $y$ satisfying $|(T^n)'(x)|\neq |(T^n)'(y)|,$ it would follow that \eqref{weak gap} holds for all $\mu_{\p}$, and so we must have a dimension gap at $1$.

\end{remark}

\begin{example}
	As an example to illustrate the above remark, consider $\{I_n\}_{n=1}^{\infty},$ where $I_1=(0,1/2)$ and $T_1(x)=2x$, $I_2=(1/2,3/4)$ and $T_2(x)=4x\mod 1,$ and $I_3=(3/4,0.861\ldots)$ and $T_3(x)=8x + \tan(x-3/4)\mod 1.$ For $n \geq 4$ we assume $T_n$ is the unique affine orientation preserving map sending $I_n$ to $(0,1)$. We can choose $I_n$ in such a way that properties $(1)$--$(5)$ hold and the corresponding $s_0$ can be made arbitrarily small. Moreover, by considering just $T_1$ and $T_2$ we can construct a Bernoulli measure whose pushforward has positive dimension. Therefore we can assume that $I_n$ have been chosen in such a way that the hypothesis of Theorem \ref{Main theorem} is satisfied. 
	
	Consider $x'\approx 0.817 $ such that $(T_1\circ T_2\circ T_3)(x')=x'$ and $y'\approx 0.789$ such that $(T_2\circ T_1\circ T_3)(y')=y'.$ Both $x'$ and $y'$ are periodic points whose orbits visit the same intervals, albeit it in a different order. Performing the relevant calculations we can show that $(T^3)'(x')\approx 72.036$ and $(T^3)'(y')\approx 72.012.$ By the above remark it follows that $\dim \mu_{\p}<1$ for any $\p.$ Applying Theorem \ref{Main theorem} we may deduce that there is a uniform dimension gap at $1$.
\end{example}

\begin{remark}
Theorem \ref{Main theorem} can be used in a general setting to determine the existence of a dimension gap at $1$. However, without knowing there exists a measure $\mu_{\p}$ such that $\dim \mu_{\p}>s_0,$ we cannot apply Theorem \ref{Main theorem} to determine the existence of a Bernoulli measure whose dimension is maximal. It would be interesting to determine a general condition by which one could establish the existence of such a $\mu_{\p}$. We remark that for a map $T$ satisfying $(1)-(5)$ we can construct $\mu_{\p}$ whose dimension can be made arbitrarily close to $s_0$ from below. 
\end{remark}

\begin{remark}
Theorem \ref{supremum cf} and Theorem \ref{Main theorem} establish the existence of a Bernoulli measure whose pushforward has maximal dimension. It is natural to wonder whether this measure is unique. We believe it is unique, however we are unable to prove it.
\end{remark}

\noindent \textbf{Acknowledgements.} The first author was supported by EPSRC grant EP/M001903/1. Part of this work was completed whilst the authors were visiting the Mittag-Leffler institute as part of the program ``Fractal geometry and Dynamics". The authors thank the organisers and staff for their support.


\begin{thebibliography}{1}
\bibitem{FLM} A. H. Fan, L. Liao, J. H. Ma, \textit{On the frequency of partial quotients of regular continued fractions, }
Math. Proc. Cambridge Philos. Soc. 148 (2010), no. 1, 179--192. 
\bibitem{Jur} N. Jurga, \textit{Dimension of Bernoulli measures for non-linear countable Markov maps,} to appear.\bibitem{KPW} Y. Kifer, Y. Peres, B. Weiss, \textit{A dimension gap for continued fractions with independent digits,} Israel J. Math. 124 (2001), 61--76. 
\bibitem{KP} J. Kinney, T. Pitcher, \textit{The dimension of some sets defined in terms of $f$-expansions,} Z. Wahrscheinlichkeitstheorie und Verw. Gebiete 4 1965/1966 293--315.
\bibitem{MU} D. Mauldin, M. Urbanski, \textit{Graph directed Markov systems: 
Geometry and dynamics of limit sets.} Cambridge Tracts in Mathematics, 148. Cambridge University Press, Cambridge, 2003. xii+281 pp. ISBN: 0-521-82538-5.
\bibitem{Rap} A. Rapaport, \textit{A dimension gap for continued fractions with independent digits - the non stationary case,}  to appear in Israel J. Math.
\bibitem{Wal} P. Walters, \textit{Invariant measures and equilibrium states for some mappings which expand distances,} Trans. Amer. Math. Soc. 236 (1978), 121--153. 
\end{thebibliography}
\end{document}